\newtheorem{theorem}{Theorem}[section]
\newtheorem{lemma}[theorem]{Lemma}
\newtheorem{corollary}[theorem]{Corollary}
\newtheorem{proposition}[theorem]{Proposition}
\newtheorem{remark}[theorem]{Remark}
\newtheorem{question}[theorem]{Question}
\newcommand{\N}{\ensuremath{\mathbb{N}}}
\newcommand{\sH}{\ensuremath{\mathcal{H}}} 
\newcommand{\K}{\kappa} 
\newcommand{\SO}{\mathcal{O}} 
\newcommand{\dpl}{\ensuremath{\mathcal{D}}}
\newcommand{\smpl}{\ensuremath{\mathcal{S}}}
\renewcommand{\l}{\ell}
\def\cX{\mathcal X}
\begin{document}

\begin{frontmatter}

\title{On the structure of numerical sparse semigroups and applications to Weierstrass points}

\author[ac]{Andr\'{e} Contiero\fnref{fn1}}
\ead{andrecontiero@mat.ufal.br}

\author[gugu]{Carlos Gustavo T. A. Moreira}
\ead{gugu@impa.br}

\author[pmv]{Paula M. Veloso\fnref{fn2}}
\ead{pmveloso@id.uff.br}

\fntext[fn1]{Partially supported by PPP-FAPEAL (20110901-011-0025-0048), by Programa de P\'{o}s-doutorado de Ver\~{a}o 2012, IMPA, and by Projeto Universal CNPq (486468/2013-5)}

\fntext[fn2]{Partially supported by Programa Primeiros Projetos (CEX - APQ-03972-10)-- FAPEMIG, by Programa de P\'{o}s-doutorado de Ver\~{a}o 2012, IMPA, and by Programa ``Para Mulheres na Ci\^{e}ncia'' -- L'Or\'{e}al, UNESCO, Academia Brasileira de Ci\^{e}ncias}

\address[ac]{Instituto de Matem\'atica, Universidade Federal de Alagoas (UFAL).
Avenida Lourival de Melo Mota,
57072-970, Macei\'o -- AL, Brazil}

\address[gugu]{Instituto de Matem\'atica Pura e Aplicada (IMPA). 
Estrada Dona Castorina 110, 22460-320, Rio de Janeiro -- RJ, Brazil}

\address[pmv]{Instituto de Matem\'{a}tica e Estat\'{i}tica, Universidade Fe\-de\-ral Fluminense (UFF). Rua M\'ario Santos Braga S/N, Campus do Valonguinho, 24020-140
Niter\'{o}i -- RJ, Brazil}

\begin{abstract}
In this work, we are concerned with the structure of sparse semigroups and some applications of them to Weierstrass points. 
We manage to describe, classify and find an upper bound for the genus of sparse semigroups. We also study the realization 
of some sparse semigroups as Weierstrass semigroups. The smoothness property of monomial curves associated to 
(hyper)ordinary semigroups presented by Pinkham and Rim-Vitulli, and  the results on double 
covering of curves by Torres are crucial in this.
\end{abstract}

\begin{keyword} numerical semigroups \sep genus \sep sparse semigroups \sep Weierstrass points \sep Weierstrass semigroups

 \MSC[2010] 20M13  \sep 14H55

\end{keyword}

\end{frontmatter}

\section{Introduction}

Let $\sH$ be a numerical semigroup of genus $g>1$. We say that
$\sH$ is a \emph{sparse semigroup} if every two subsequent gaps of $\sH$
 are spaced by at most $2$. The concept of a sparse semigroup was introduced
by Munuera--Torres--Villanueva in \cite{MunueraTorresVillanueva} and emerged as a 
generalisation of \emph{Arf semigroups}. The latter appear naturally in the study of
 one-dimensional analytically unramified domains by analysing their
 valuation semigroups (see \cite{Arf},n\cite{BarucciDobbsFontana} and \cite{Lipman} 
 for further details on Arf semigroups). 
Furthermore, one of the main subjects related to numerical semigroups are 
Weierstrass points on algebraic curves (points whose gap sequence of the numerical semigroup
associated to a smooth projective pointed curve is the sequence of orders of vanishing 
of the holomorphic differentials of the curve at the base point.)

In this work, we are concerned with studying the structure of sparse semigroups and some
applications to Weierstrass points. Looking for a classification and for an upper bound for genus of sparse 
semigroups, we introduce a \emph{leap} count assertion (Theorem \ref{leapthm}), which involves an 
interplay between single and double leaps. Besides, it plays a fundamental role in the main results of this work. 

Section \ref{SparseSec} presents several consequences of Theorem \ref{leapthm}. We prove that, if the 
genus of a sparse semigroup is large enough, then the last few gaps are spaced by $2$. This
results is proved regardless the parity of the Frobenius number $\l_{g}$ (Proposition \ref{finalDblLeapsProp}). 
Additionally, we classify some sparse semigroups with few single leaps or with large Frobenius number. 
At this point, \emph{(hyper)ordinary semigroups} (see \cite{Rim-Vitulli}) and $\gamma$-\emph{hyperelliptic semigroups} 
show up (see \cite{Torres}).

Looking for an upper bound for the genus of a sparse semigroup we introduce, in Section \ref{limitsparse}, 
the concept of a \emph{limit sparse semigroup}: sparse semigroups with as many single as double leaps. 
Considering the parity of the Frobenius number, we classify limit sparse semigroups with even Frobenius 
number (Theorem \ref{charactSparseLimTheo}), which are all hyperordinary with multiplicity $3$. 
As a consequence, we get an upper bound for the genus of any sparse semigroup with even Frobenius 
number, namely $g<4r$ where $\l_{g}=2g-2r$ (Corollary \ref{genusQuotaSparseTheo}). 

We also classify limit sparse semigroups with odd Frobenius number. In this case, the \emph{multiplicity} of 
the semigroup plays an import role. If the multiplicity of the limit sparse semigroup $\sH$ is even, then $\sH$ 
is an $r$-hyperelliptic semigroup, where $\l_{g}=2g-2r-1$, (Theorem \ref{oddFrobEvenMultTheo}). On the 
other hand, if the multiplicity is odd, then either $\sH$ is hyperordinary of multiplicity $3$, or a semigroup of 
multiplicity $2r+1$, where $\l_{g}=2g-2r-1$ is the Frobenius number of $\sH$ (Theorem \ref{oddFrobOddMul}). 
With the classification of limit sparse semigroups with odd Frobenius number in mind, we find an upper bound 
for the genus of these sparse semigroups, namely $g\leq4r+1$, except when all nongaps smaller than the 
Frobenius number are even (Corollary \ref{genusQuotaSparseOddTheo}).

Finally, in the last section, we study the realization of (limit) sparse semigroups as Weierstrass semigroups. 
At this point the smoothness property of monomial curves associated to (hyper)ordinary semigroups 
presented by Pinkham \cite{Pinkham} and Rim-Vitulli \cite{Rim-Vitulli} is crucial. Furthermore, 
regarding $\gamma$-hyperelliptic sparse semigroups, the results by Torres \cite{Torres} 
on double covering of curves are applied.

\section{Sparse semigroups}\label{SparseSec}

Let $\N$ be the set of natural numbers. A \emph{numerical semigroup} 
$\sH = \{0 = n_{0} < n_{1} < \ldots \} \subseteq \N$ of finite genus $g\geq 1$ is an additive 
subset of $\N$ containing $0$, closed under addition and such that there are only $g$
elements in the set $\N\setminus\sH=\{1=\l_1<\l_2<\dots<\l_{g}\}$. The elements in 
$\N \setminus \sH$ are called \emph{gaps} and the largest gap $\l_{g}$ is called the
\emph{Frobenius number} of $\sH$.  The elements of $\sH$ are referred to as \emph{nongaps}, 
 and the smallest positive nongap is said to be the \emph{multiplicity} of $\sH$. 

A \emph{sparse} (numerical) \emph{semigroup} $\sH$ is a numerical semigroup where two subsequent
gaps of $\sH$ with $1\leq\l_{i-1},\l_{i}\in\{\l_1 < \ldots < \l_g\}$ are spaced by at most $2$,
\begin{equation*}
\l_{i}-\l_{i-1}\leq 2, \, i=2,\ldots,g \ \ , \ \l_i\in\N\setminus\sH .
\end{equation*}
Equivalently, $\sH$ is sparse if its first $c-g$ nongaps satisfy $$n_{i + 1} - 
n_ i \geq 2, \, i = 1,\ldots , c - g\ ,$$ where $c:=\l_g+1$ is the least integer 
such that $c+h\in \sH$ for every $h\in \N$. The integer $c$ is said to be the 
\emph{conductor} of $\sH$ (clearly, $c=n_{c-g}$).

Two particular classes of sparse semigroups will appear frequently in this work: \emph{ordinary sparse semigroups} 
($\sH_g=\{0,g+1,g+2,\dots\}$) and \emph{hyperordinary sparse semigroups} ($\sH=m\N+\sH_{g}$, $0<m<g$).

Another class of sparse semigroups are Arf semigroups \cite[Corollary 2.2]{MunueraTorresVillanueva}. We recall that
a numerical semigroup $\sH$ is an \emph{Arf semigroup} if
$n_{i} + n_{j} - n_{k} \in \sH$, for $i \geq j \geq k$ (see \cite[Theorem I.3.4]{BarucciDobbsFontana} for fifteen alternative
characterizations of Arf semigroups, among which we call attention to the following: $\sH$
 is an Arf semigroup if and only if $2n_i-n_j\in \sH$, for all $i\geq j\geq 1$). There are, however,  sparse semigroups that 
 are not Arf see Remark \ref{nonArf}  or \cite[Example 2.3]{MunueraTorresVillanueva}).

It is well known that for any numerical semigroup the Frobenius number $\l_{g}$ 
satisfies $\l_{g} \leq 2g - 1$ (see, for instance, \cite[Theorem 1.1]{Oliveira}).
We may, thus, define the parameter $$\K: = 2g - \l_{g}>1\, ,$$
and we notice that $\K\leq g$.

Since sparse semigroups are the ones where subsequent gaps are either consecutive
or spaced by $2$,  it is only natural to count how many pairs of subsequent gaps
are in either situation. Given a sparse semigroup $\sH$, consider the sets:
$$\dpl := \{ i \, ; \, \l_{i + 1} - \l_{i} = 2 \} \ \text{ (``double leaps'')},$$
$$\smpl := \{ i \, ; \, \l_{i + 1} - \l_{i} = 1 \} \ \text{ (``single leaps'')},$$
\noindent and their cardinalities:
$$D :=\# \dpl \ \ \text{and}  \ \ S :=\#  \smpl.$$

\begin{theorem}\label{leapthm}
Let $\sH$ be a sparse semigroup of genus $g$. Then:
\begin{enumerate}
\item $D + S = g - 1$.
\item $D = g - \K$.
\item $S = \K - 1$.
\end{enumerate}
\end{theorem}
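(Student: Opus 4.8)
The plan is to observe that all three statements follow from a single telescoping identity together with the sparseness hypothesis, so the real content is the computation in part (2); parts (1) and (3) are bookkeeping.

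First I would prove part (1) directly from the definition. There are exactly $g-1$ consecutive pairs $(\l_i,\l_{i+1})$ with $i=1,\ldots,g-1$ among the $g$ gaps. Since $\sH$ is sparse, each difference $\l_{i+1}-\l_i$ equals either $1$ or $2$, so every index $i\in\{1,\ldots,g-1\}$ lies in exactly one of $\smpl$ or $\dpl$. The two sets are therefore disjoint and cover all $g-1$ indices, whence $D+S=g-1$.

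Next, for part (2), I would set up the telescoping sum
\begin{equation*}
\l_{g}-\l_{1}=\sum_{i=1}^{g-1}(\l_{i+1}-\l_{i}).
\end{equation*}
Each summand contributes $1$ when $i\in\smpl$ and $2$ when $i\in\dpl$, so the right-hand side equals $S+2D$. Using $\l_{1}=1$ this gives $\l_{g}-1=S+2D$. Substituting $S=g-1-D$ from part (1) collapses this to $\l_{g}-1=(g-1)+D$, i.e. $D=\l_{g}-g$. Finally I would invoke the definition $\K=2g-\l_{g}$, equivalently $\l_{g}=2g-\K$, to conclude $D=(2g-\K)-g=g-\K$.

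Part (3) is then immediate: subtracting the expression for $D$ from part (1) yields $S=(g-1)-(g-\K)=\K-1$. There is no genuine obstacle here; the only point requiring a moment's care is recognizing that the gap differences form a telescoping sum whose value is pinned down by the Frobenius number, which converts the geometric data $\l_{g}$ directly into the combinatorial counts $D$ and $S$ via the parameter $\K$.
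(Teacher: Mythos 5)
Your proof is correct and follows essentially the same route as the paper's: your telescoping sum $\l_g-\l_1=S+2D$ is precisely the paper's identity $\l_g=1+S+2D$, combined with the same counting argument $D+S=g-1$ and the definition $\K=2g-\l_g$. The use of $\l_1=1$ is justified since the paper defines the gap sequence with $\l_1=1$, so nothing is missing.
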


\begin{proof}
(1): Every gap, except the last one, $\l_{g}$, is the starting point of a leap. 
So the total number of leaps, either single or double, is $g - 1$. Thus, 
$D + S = g - 1$.

(2) and (3): Between $1$ and $\l_{g} = 2g - \K$, there are $S$ single leaps and 
$D$ double leaps, regardless of their order. So, $\l_{g} = 2g - \K = 1 + S +2D$. 
This equation together with the previous one yield the desired results.
\end{proof}

We shall denote leaps by an ordered pair of subsequent gaps $(\l_{i}, \l_{i+1})$, where $\l_{i} <  \l_{i+1}$. Clearly, a leap
is single if $\l_{i} + 1 = \l_{i+1}$, and double if $\l_{i} + 2 = \l_{i+1}$. 

Next proposition gives us a little bit more information on the structure of sparse semigroups.
It tells us that, if $g \geq 2\K - 1$, then the last few gaps occur every two integers. 
\begin{proposition}\label{finalDblLeapsProp}
Let $\sH$ be a sparse semigroup of genus $g$. If $g\geq 2\K-1$, then $\l_{i+1} - \l_{i} = 2$, 
for every $i=2\K-2,\ldots,g-1$.
\end{proposition}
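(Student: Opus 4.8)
The plan is to recast the conclusion as a statement about where single leaps can occur. The leaps at positions $2\K-2,\ldots,g-1$ are precisely the ones required to be double, so it suffices to prove that every single leap sits at a position $i\le 2\K-3$; equivalently, writing $j$ for the position of the \emph{last} single leap, that $j\le 2\K-3$. Such a $j$ exists because Theorem~\ref{leapthm} gives $S=\K-1\ge 1$ (as $\K>1$). Once $j\le 2\K-3$ is in hand, the hypothesis $g\ge 2\K-1$ guarantees that $[2\K-2,g-1]$ is a nonempty set of valid leap positions lying strictly above $j$; since $j$ is the last single leap, each of these is a double leap, which is exactly the assertion.

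The heart of the argument is a downward-propagation principle for pairs of consecutive gaps. Suppose $\l_j$ and $\l_j+1=\l_{j+1}$ are both gaps, and let $a\in\sH$ with $0\le a\le \l_j-1$. Then $\l_j-a$ and $\l_j+1-a$ are again both gaps: if, say, $\l_j-a\in\sH$, closure under addition would force $(\l_j-a)+a=\l_j\in\sH$, contradicting that $\l_j$ is a gap, and the same reasoning rules out $\l_j+1-a\in\sH$. Thus each admissible $a$ produces a single leap located at the value $\l_j-a$, and distinct values of $a$ give distinct such leaps (the case $a=0$ recovering the original one). Counting them, $S$ is at least the number of nongaps in $[0,\l_j-1]$. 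Since $[0,\l_j-1]$ contains $\l_j$ integers, of which exactly $j-1$ are gaps (namely $\l_1,\ldots,\l_{j-1}$), this count is $\l_j-j+1$, whence $S\ge \l_j-j+1$.

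It then remains to pin down $\l_j$ using that $j$ is the \emph{last} single leap. All $S=\K-1$ single leaps lie at positions $\le j$, and position $j$ is one of them, so the leaps at positions $1,\ldots,j-1$ consist of $\K-2$ single leaps and $(j-1)-(\K-2)=j-\K+1$ double leaps; summing these spacings onto $\l_1=1$ gives $\l_j=1+(\K-2)+2(j-\K+1)=2j-\K+1$. Substituting into the bound above yields $\K-1=S\ge(2j-\K+1)-j+1=j-\K+2$, that is $j\le 2\K-3$, as desired. I expect the main obstacle to be obtaining a count strong enough to conclude: translating a consecutive gap-pair only by the multiplicity (the naive move) gives a bound that is far too weak when the multiplicity is large, and the key point is to translate by \emph{every} nongap below $\l_j$ and then feed in the exact value of $\l_j$ supplied by Theorem~\ref{leapthm}. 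Some care is also needed with the boundary cases ($a=0$ and the positivity $\l_j-a\ge 1$) and with checking that the translated pairs are genuine single leaps of $\sH$.
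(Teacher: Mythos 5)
Your proof is correct, and it organizes the argument differently from the paper, although both rest on the same combinatorial engine: translating a pair of consecutive gaps $(\l,\l+1)$ by nongaps $a$ to produce pairwise distinct single leaps $(\l-a,\l+1-a)$, and playing the resulting count against $S=\K-1$ from Theorem~\ref{leapthm}. The paper applies this trick only at the top of the gap sequence: for $g=2\K-1$ it rules out a single \emph{last} leap by exhibiting at least $\K$ single leaps, and it then handles larger genus by induction, truncating via $\widetilde{\sH}=\sH\cup\{\l_g\}$ and re-running the top-leap argument. You instead apply the trick at the last single leap, wherever it sits, and close the estimate with an ingredient the paper never needs: the exact value $\l_j=2j-\K+1$, computed from the leap composition below position $j$ (available precisely because $j$ is the last single leap, so positions $1,\ldots,j-1$ carry exactly $\K-2$ single leaps). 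This yields $\K-1=S\ge \l_j-j+1=j-\K+2$, hence $j\le 2\K-3$, in one shot, with no induction and no truncation. Your version is shorter and in fact proves something slightly stronger: the bound $j\le 2\K-3$ on the position of the last single leap holds for \emph{every} sparse semigroup, the hypothesis $g\ge 2\K-1$ serving only to make the range $[2\K-2,g-1]$ nonempty, a point the paper's inductive formulation obscures. What the paper's route buys is uniformity of toolkit: the truncation construction $\sH\cup\{\l_g\}$ recurs in Lemma~\ref{reductionLemma} and in Corollaries~\ref{genusQuotaSparseTheo} and~\ref{genusQuotaSparseOddTheo}, so proving the proposition by that mechanism keeps the later arguments parallel. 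One pedantic point: your existence claim for $j$ invokes $\K>1$, which the paper asserts when defining $\K$ but which fails for the hyperelliptic semigroup $\langle 2,2g+1\rangle$, where $\K=1$; there $S=0$, every leap is double, and the conclusion is trivially true, so nothing is lost --- but a one-line disclaimer covering $S=0$ would make the proof airtight.
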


\begin{proof}
If $g=2\K-1$, then $\sH$ has $\K-1$ single leaps. Let us assume that 
$\l_g-\l_{g-1}=1$. Thus $\l_{g-1}-n_i,\l_{g}-n_i$ are consecutive gaps for 
$i=1,\ldots,\K-1$. Since $\l_{g-1},\l_g$ are consecutive gaps, the total number 
of single leaps for $\sH$ is bigger than $\K-1$, which is a contradiction.
Now, if $\sH$ is a sparse semigroup of genus $g=2\K+j$, with $j\in\N$. Then 
$\widetilde{\sH}=\sH\cup\{\l_g\}$ is a sparse semigroup of genus 
$\widetilde{g}=2\K+j-1$. Thus the gaps of $\sH$ satisfy $\l_{i+1}-\l_{i}=2$,
for $i=2\K-2,\ldots,g-2$. Hence we just have to analyze $\l_{g}-\l_{g-1}$, which 
is analogous to the case where $g=2\K-1$.
\end{proof}

For even values of $\K$,the previous result had been stated and proved by Munuera, Villanueva and Torres 
\cite[Theorem 2.9 (3)] {MunueraTorresVillanueva} by means of a completely diverse approach.

Upon researching sparse semigroups, it became clear to us that those having 
genus $g = 2\K -1$ and Frobenius number $\l_{g} = 2g - \K = 3\K -2$ are quite 
special. In fact, the lemma below suggests that they are ``limit'' in some sense; 
this notion will become clearer in the next section.

\begin{lemma}\label{reductionLemma}
Let $\sH$ be a sparse semigroup of genus $g = 2\K + j$, $j \geq 0$, with 
Frobenius number $\l_g=2g - \K$. Then there is a sparse semigroup 
$\widetilde{\sH}$ of genus $\widetilde{g} = 2\K -1$ and Frobenius number 
$\l_{\widetilde{g}} = 2\widetilde{g} - \K = 3\K - 2$ such $\sH$ is a 
subsemigroup of $\widetilde{\sH}$. 
\end{lemma}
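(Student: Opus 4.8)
The plan is to obtain $\widetilde{\sH}$ from $\sH$ by repeatedly adjoining the current Frobenius number, as was already done in the proof of Proposition \ref{finalDblLeapsProp}. The elementary move is this: for any numerical semigroup $T$ with Frobenius number $F$, the set $T \cup \{F\}$ is again a numerical semigroup. Indeed, closure under addition is the only thing to verify, and for $x \in T \cup \{F\}$ with $x \geq 1$ we have $F + x > F$, so $F + x \in T$ because every integer greater than the Frobenius number $F$ lies in $T$; together with $F + 0 = F$ this gives closure. Moreover the gaps of $T \cup \{F\}$ are exactly the gaps of $T$ with the largest one removed, so if $T$ is sparse then $T \cup \{F\}$ is sparse as well, since deleting the top gap cannot enlarge any difference of consecutive gaps.

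I would then track how the genus and the parameter $\K$ behave under this move. Passing from $T$ to $T \cup \{F\}$ decreases the genus by exactly $1$. Writing $\l_{g'-1} < \l_{g'} = F$ for the two largest gaps of $T$, the new Frobenius number is $\l_{g'-1}$, so the new value of the parameter is $2(g'-1) - \l_{g'-1}$; this equals the old value $2g' - \l_{g'}$ if and only if $\l_{g'} - \l_{g'-1} = 2$, that is, exactly when the terminal leap of $T$ is double. This is precisely where Proposition \ref{finalDblLeapsProp} enters: whenever $T$ is sparse with genus at least $2\K - 1$ and parameter $\K$, its last leap is double, and hence adjoining $F$ keeps $\K$ unchanged while lowering the genus by one.

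With these two observations I would set up the induction. Put $\sH^{(0)} := \sH$ and $\sH^{(k+1)} := \sH^{(k)} \cup \{F^{(k)}\}$, where $F^{(k)}$ is the Frobenius number of $\sH^{(k)}$. I claim that each $\sH^{(k)}$, for $0 \leq k \leq j+1$, is a sparse semigroup of genus $2\K + j - k$ with parameter $\K$. The base case is the hypothesis on $\sH$. For the inductive step with $0 \leq k \leq j$, the genus $2\K + j - k \geq 2\K > 2\K - 1$, so Proposition \ref{finalDblLeapsProp} applies to $\sH^{(k)}$ and its terminal leap is double; the previous paragraph then shows that $\sH^{(k+1)}$ is sparse of genus $2\K + j - (k+1)$ with parameter still $\K$. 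Taking $k = j+1$ yields $\widetilde{\sH} := \sH^{(j+1)}$, a sparse semigroup of genus $\widetilde{g} = 2\K - 1$ and parameter $\K$, whence $\l_{\widetilde g} = 2\widetilde g - \K = 3\K - 2$; since $\K > 1$ we have $\widetilde g = 2\K - 1 > 1$, so $\widetilde{\sH}$ is genuinely a sparse semigroup. By construction $\sH = \sH^{(0)} \subseteq \sH^{(j+1)} = \widetilde{\sH}$, as desired.

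The one point requiring care, and the main obstacle, is the invariance of $\K$ along the whole chain. A priori, adjoining a Frobenius number can drop the parameter, which would push the intermediate semigroups out of the target family and break the induction. The argument goes through because Proposition \ref{finalDblLeapsProp} forces the terminal leap to remain double at every step before the last, the genus never falling below $2\K - 1$ until the final semigroup is reached; this is exactly the condition that keeps $\K$ fixed throughout.
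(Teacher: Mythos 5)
Your proof is correct and takes essentially the same route as the paper: both rest on Proposition \ref{finalDblLeapsProp} and obtain $\widetilde{\sH}$ by adjoining to $\sH$ its $j+1$ largest gaps, the paper doing this in a single step (its displayed set $\{\l_{g-j-2},\dots,\l_g\}$ is an indexing slip for $\{\l_{g-j},\dots,\l_g\}$, as adding $j+3$ gaps would give genus $2\K-3$), while you adjoin the current Frobenius number one step at a time. Your inductive bookkeeping, showing that each adjunction preserves sparseness and keeps $\K$ fixed precisely because the terminal leap is double, just makes explicit the invariance the paper leaves implicit.
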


\begin{proof}
Since $g \geq 2\K-1$, Proposition \ref{finalDblLeapsProp} tells us that the
last $j + 2$ gaps of $\sH$ are spaced by $2$. Consider the set
$\widetilde{\sH} = \sH \cup \{\l_{g-j-2}, \l_{g-j-1},\dots,\l_{g}\}$. Clearly,
$\widetilde{\sH}$ contains $0$ and is additively closed. From the fact that
$\sH$ is sparse, we see that so is $\widetilde{\sH}$. So $\widetilde{\sH}$ is
a sparse semigroup and, by construction, it has $\widetilde{g} = 2\K - 1$ gaps,
$\sH$ is a subsemigroup of $\widetilde{\sH}$, and its Frobenius number is
$\l_{\widetilde{g}} = 2\widetilde{g} - \K = 3\K - 2$.

\end{proof}

\medskip

We present now several consequences and applications of our leap-count result
(Theorem \ref{leapthm}), which illustrate the techniques used in the theory of
sparse semigroups. It will be clear, trough the next results, that there are
only few sparse semigroups with large Frobenius number (or, equivalently, with
few single leaps). We will make this statement more precise in the next section.

We recall that a numerical semigroup is said to be \emph{symmetric}
(resp. \emph{quasi-symmetric}) if $\l_g=2g-1$ (resp. $\l_g=2g-2)$.

\begin{corollary}\label{symCor}
If $\sH$ is a symmetric sparse semigroup, then $\sH$ is the hyperelliptic
semigroup $\sH=\langle 2,2g+1 \rangle $.
\end{corollary}
\begin{proof}
Since $\l_{g}=2g-1$, the sparse semigroup $\sH$ does not have single leaps i.e. $\K=0$.
Then $2\in\sH$ and all the odd numbers between $1$ and $\l_{g}$ are gaps.
\end{proof}

\begin{corollary}\label{quasiSymCor}
If $\sH$ is a quasi-symmetric sparse semigroup, then, either $\sH= \langle 3,4,5 \rangle $, or $\sH= \langle 3,5,7 \rangle $.
\end{corollary}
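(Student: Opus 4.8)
The plan is to read the gap structure straight off the leap count and then let additive closure pin down the genus. Since $\sH$ is quasi-symmetric we have $\l_g = 2g-2$, hence $\K = 2g-\l_g = 2$, and Theorem \ref{leapthm} immediately gives $S = \K-1 = 1$ and $D = g-\K = g-2$. So a quasi-symmetric sparse semigroup carries exactly one single leap, all of its other leaps being double.

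First I would locate that single leap by controlling the multiplicity. The multiplicity cannot equal $2$: if $2 \in \sH$ then every positive even integer is a nongap, which forces the Frobenius number to be odd and contradicts $\l_g = 2g-2$. Hence $2$ is a gap, so $\l_1 = 1$ and $\l_2 = 2$, and the very first leap $(\l_1,\l_2)=(1,2)$ is the unique single leap. As $S=1$ forces every later leap to be double, I obtain $\l_k = 2(k-1)$ for $k\geq 2$, so the gap set is
\[
\{\l_1,\ldots,\l_g\}=\{1,2,4,6,\ldots,2g-2\}.
\]
In particular $3$ is a nongap, so the multiplicity is exactly $3$.

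The decisive step is then a single closure relation. Because $3\in\sH$, necessarily $6 = 3+3 \in \sH$; but for $g\geq 4$ one has $6 = \l_4 \leq 2g-2$, which places $6$ among the gaps listed above --- a contradiction. Thus $g\leq 3$, leaving only two possibilities. For $g=2$ the gaps are $\{1,2\}$ and $\sH=\langle 3,4,5\rangle$; for $g=3$ the gaps are $\{1,2,4\}$ and $\sH=\langle 3,5,7\rangle$. I would close by checking directly that both are genuine quasi-symmetric sparse semigroups.

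I expect the main (though mild) obstacle to be conceptual: recognizing that the leap count of Theorem \ref{leapthm} already rigidifies the entire candidate gap set, so that the only surviving freedom is the value of $g$, which the relation $3+3=6$ then eliminates. Notably this route bypasses Proposition \ref{finalDblLeapsProp} altogether, since with $\K=2$ the parity argument on the multiplicity already forces the unique single leap to sit at the front.
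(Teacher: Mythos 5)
Your proof is correct and follows essentially the same route as the paper's: use Theorem \ref{leapthm} to get $\K=2$, $S=1$, $D=g-2$, rule out $2\in\sH$ by parity of $\l_g=2g-2$, read off the gap set $\{1,2,4,\ldots,2g-2\}$, and conclude $g\leq 3$ before enumerating the two cases. Your only (welcome) addition is to make explicit, via $6=3+3\in\sH$ versus $6=\l_4$ for $g\geq 4$, the closure argument behind the paper's terse assertion that $g\leq 3$.
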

\begin{proof}
Since $\l_g=2g-2$, we have that $\K = 2$ and so $S=1$ and $D=g-2$. We must
have $1,2 \not\in \sH$, which already accounts for the only single leap, so
$3 \in \sH$. Since all subsequent leaps must be double, the remaining gaps
must all be even numbers. Then $g\leq 3$. 
Now, notice that there are no numerical semigroups of genus $g = 1$, otherwise
$\l_g=\l_1=2g-2=0$, a contradiction. If $g=2$, we have $\l_g= \l_2=2g-2=2$, and
so $\sH= \langle 3,4,5 \rangle $. Finally, for $g=3$, we have $\l_g=2g-2=4$,
so $1$ and $2$ are also gaps, for they divide $4$, and thus $\sH= \langle 3,5,7 \rangle $.
\end{proof}

We say that a numerical semigroup is \emph{$\gamma$-hyperelliptic} if it has exactly 
$\gamma$ even gaps.  For the sake of clarity, we note that a $\gamma$-hyperelliptic semigroup
may have odd gaps and the integer $\gamma$ is not necessarily its genus.  Such semigroups
 are closely related with double covering of curves  \cite{BallicoCentina, Torres, Torres2}. 
 Additionaly, they arise when we deal with the
characterization of sparse semigroups having as many single as double leaps
(see next section).

\begin{theorem}\label{charactOdd3}
Let $\sH$ be a sparse semigroup having genus $g \geq 3$ and $\l_{g} = 2g - 3$. Then
 $\sH$ is one of the following:
\begin{enumerate}
\item $\sH =  3\N +\sH_{5}$, $\sH$ is $2$-hyperelliptic;
\item $\sH =  3\N +\sH_{7}$, $\sH$ is $2$-hyperelliptic;
\item $\sH = 2(\N \setminus \{1\}) \cup \sH_{2g - 2}$, $\sH$ is
 $1$-hyperelliptic. 
\end{enumerate}
\end{theorem}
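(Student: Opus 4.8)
The plan is to use the leap count to fix the structure near the multiplicity, and then let closure under addition both terminate the gap sequence and bound the genus. First I would record that $\K = 2g - \l_g = 3$, so that Theorem \ref{leapthm} gives exactly $S = 2$ single leaps and $D = g-3$ double leaps; thus $1 = \l_1 < \l_2 < \cdots < \l_g = 2g-3$ is a sequence of steps of size $2$ with exactly two exceptional steps of size $1$. Since $1$ is always a gap, everything hinges on the multiplicity $m$, and the goal is to show $m \in \{3,4\}$.

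To rule out $m = 2$: if $2 \in \sH$ then $2\N \subseteq \sH$, so every gap is odd, every leap is a double leap, and $S = 0$, contradicting $S = 2$. Hence $1, 2 \notin \sH$, giving $\l_1 = 1$, $\l_2 = 2$ and a first single leap $(1,2)$. Sparsity then forces $\l_3 \in \{3,4\}$, and I would split according to whether $3$ is a gap.

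If $3$ is a gap, then $\l_3 = 3$, $m = 4$, and both single leaps are spent, so every later leap is double and $\l_i = 2i - 3$ for $i \geq 3$. This produces the gaps $1, 2, 3, 5, 7, \dots, 2g-3$; I would then check directly that the complement $\{0\} \cup \{4, 6, \dots, 2g-2\} \cup \{2g-1, 2g, \dots\}$ is additively closed, identify it with $2(\N\setminus\{1\}) \cup \sH_{2g-2}$ (item 3), and observe that its only even gap is $2$, so it is $1$-hyperelliptic. This branch is consistent for every $g \geq 3$.

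If $3$ is a nongap, then $\l_3 = 4$, $m = 3$, and $3\N \subseteq \sH$, leaving one single leap. Since $6 \in \sH$, sparsity forces the second single leap to be $(4,5)$, so $5 \in \sH$ and $\langle 3,5\rangle \subseteq \sH$; in particular $6, 8, 9 \in \sH$. Then the next gap is $\l_5 = 7$, after which a further gap $\l_6$ would have to satisfy $\l_6 \geq 10$ while sparsity demands $\l_6 \leq 9$ --- impossible once $g \geq 6$. Hence $g \in \{4,5\}$, and the gaps are $1,2,4,5$ (namely $3\N + \sH_5$, item 1) or $1,2,4,5,7$ (namely $3\N + \sH_7$, item 2), each having even gaps $\{2,4\}$ and so $2$-hyperelliptic. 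The crux of the argument, and its main obstacle, is exactly this last branch: sparsity alone does not close off the sequence, and one must invoke closure under addition --- through the nongaps $6,8,9$ generated by $3$ and $5$ --- to cap the genus at $5$ and leave items 1 and 2 as the only survivors.
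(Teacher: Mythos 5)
Your overall route is the paper's own: compute $S=2$ from Theorem \ref{leapthm}, rule out $2\in\sH$, split on whether $3$ is a gap, and in the multiplicity-$3$ branch use additive closure to terminate the gap sequence. The first branch ($3$ a gap) is handled correctly. But the second branch, which you yourself flag as the crux, contains a genuine error: from ``sparsity forces the second single leap to be $(4,5)$'' you conclude ``$5\in\sH$''. A single leap $(4,5)$ is by definition a pair of subsequent \emph{gaps}, so the correct conclusion is the opposite: $5\notin\sH$. Indeed $5$ is a gap of both $3\N+\sH_5$ and $3\N+\sH_7$, and your own final gap lists $1,2,4,5$ and $1,2,4,5,7$ presuppose this. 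Consequently ``$\langle 3,5\rangle\subseteq\sH$'' is false, and the deduction ``$6,8,9\in\sH$, hence $\l_6\geq 10$'' collapses as written, since the membership $8\in\sH$ was obtained from $8=3+5$. Your closing paragraph, attributing the genus cap to ``the nongaps $6,8,9$ generated by $3$ and $5$,'' repeats the same mistake.

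The branch is repairable with material you already have, and the repair is in effect what the paper does. Once the gaps $1,2,4,5$ are in place, both single leaps $(1,2)$ and $(4,5)$ are spent, so every later leap must be double: a fifth gap must be $7$, and a sixth gap would have to be $9$, since $\l_6=8$ would create a third single leap $(7,8)$, contradicting $S=2$. But $9=3+3+3\in 3\N\subseteq\sH$, so $\l_6$ cannot exist and $g\leq 5$. The only closure facts needed are $6\in\sH$ (which you correctly used to force $\l_4=5$) and $9\in\sH$ --- nothing generated by $5$. With that substitution your argument matches the paper's proof and is complete.
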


\begin{proof}


Theorem \ref{leapthm} (3) tells us that $S=2$. Clearly, $\l_{1} = 1$ and $\l_{2} = 2$, which accounts for 
one single leap. If $3 \in \sH$, by the sparse property, we must have that
$1, 2, 4, 5 \not\in \sH$, and this accounts for all $2$ single leaps. 
Thus, either $\N \setminus \sH = \{1, 2, 4, 5\}$ and $g = 4$ (1), or $\N \setminus \sH = \{1, 2, 4, 5, 7\}$ 
and $g = 5$ (2). 
Otherwise, $\l_{3} = 3$, and all leaps from this point 
on must be double. So $\N \setminus \sH = \{1, 2, 3, 5, 7, \ldots 2g - 3\}$ and
$\sH = \{4, 6, \ldots , 2g - 4\} \cup \{n \in \N \, ; \, n \geq 2g - 2 \}$ (3).
\end{proof}

\begin{corollary}
Let $\sH$ be a numerical semigroup having genus $g \geq 6$ and $\l_{g} = 2g - 3$.
 Then the following are equivalent:
\begin{itemize}
\item[a.] $\sH$ is sparse;
\item[b.] $\sH$ is $1$-hyperelliptic.
\end{itemize}
\end{corollary}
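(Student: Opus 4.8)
The plan is to deduce both implications from the value of $\K$ attached to such a semigroup. First I would record that since $\l_g = 2g - 3$ we have $\K = 2g - \l_g = 3$, so every \emph{sparse} $\sH$ meeting the hypotheses is covered by Theorem \ref{charactOdd3}. This immediately splits the work into the two directions.

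For the implication (a)$\Rightarrow$(b) I would simply invoke Theorem \ref{charactOdd3}: a sparse semigroup with $\l_g = 2g - 3$ is one of the three listed types. The first two occur only in genus $4$ and $5$ respectively, so the assumption $g \geq 6$ rules them out and leaves only the third, $\sH = 2(\N\setminus\{1\})\cup\sH_{2g-2}$, which is $1$-hyperelliptic. Hence (a) forces (b), and this is the only place where the bound $g \geq 6$ is actually needed.

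The implication (b)$\Rightarrow$(a) is where a short independent argument is required. The key step is a counting one: all gaps of $\sH$ lie in $\{1,\dots,\l_g\} = \{1,\dots,2g-3\}$, and this range contains exactly $g - 1$ odd integers, namely $1,3,\dots,2g-3$. If $\sH$ is $1$-hyperelliptic, it has exactly one even gap and therefore exactly $g - 1$ odd gaps; since $g-1$ is the total number of odd integers available in the range, \emph{every} odd integer up to $2g-3$ must be a gap. It then remains to locate the unique even gap, and here I would observe that $2 \notin \sH$, for otherwise $\sH$ would contain all positive even integers and have no even gap at all, contradicting $1$-hyperellipticity. Thus $2$ is itself the even gap, and the full gap set is forced to be $\{1,2,3,5,7,\dots,2g-3\}$ (which indeed has the right cardinality $g$).

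Finally I would conclude directly from this explicit gap set: the consecutive differences are $2-1=1$, $3-2=1$, and $2$ from then on, all at most $2$, so $\sH$ is sparse (equivalently, $\sH = 2(\N\setminus\{1\})\cup\sH_{2g-2}$ is precisely case (3) of Theorem \ref{charactOdd3}). I do not expect a genuine obstacle: the only real content is the counting step that forces all odd integers below the Frobenius number to be gaps, together with the elementary remark that $2\in\sH$ is incompatible with having a nonzero but finite number of even gaps.
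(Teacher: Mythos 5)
Your proposal is correct, and its forward direction (a)$\Rightarrow$(b) coincides exactly with the paper's: both invoke Theorem \ref{charactOdd3} and use $g \geq 6$ to discard the two low-genus cases (1) and (2). In the reverse direction you genuinely diverge. The paper first proves that the unique even gap must be $2$ by an additivity argument (if an even $\l > 2$ were a gap, then $\l = 2 + (\l - 2)$ would force a smaller even gap), concludes that every even number larger than $2$ is a nongap, hence $\l_r \geq 2r-3$ for all $r \geq 3$, and finally propagates the equality $\l_g = 2g-3$ downward to get $\l_r = 2r-3$ for $3 \leq r \leq g$, i.e.\ sparseness. You instead run a pigeonhole count on the odd side: a $1$-hyperelliptic semigroup has exactly $g-1$ odd gaps, and $\{1,\ldots,2g-3\}$ contains exactly $g-1$ odd integers, so every odd number up to the Frobenius number is a gap; the even gap is then trivially $2$, since $2 \in \sH$ would leave no even gaps at all. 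Your route is more direct: it determines the full gap set $\{1,2,3,5,\ldots,2g-3\}$ in one stroke, makes sparseness an immediate inspection of consecutive differences, and explicitly recovers $\sH = 2(\N\setminus\{1\}) \cup \sH_{2g-2}$, case (3) of the theorem; indeed your counting shows sparseness would hold wherever the single even gap sat, since inserting one even number among the odd gaps only splits a double leap into two single leaps. What the paper's version buys in exchange is the intermediate inequality $\l_r \geq 2r-3$, valid for any $1$-hyperelliptic semigroup without assuming the value of $\l_g$, together with the equality-propagation mechanism. Both arguments correctly use $g \geq 6$ only in the direction (a)$\Rightarrow$(b), as you note.
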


\begin{proof}
The implication $a. \implies b.$ follows immediately from the previous theorem.
The other implication can be proved as follows: since $\sH$ is $1$-hyperelliptic, 
the only even gap of $\sH$ must be $2$. Indeed, if $\l>2$ were an even gap, then, from $\l=2+(\l-2)$, 
$2$ or $\l-2$  would be a smaller even gap, a contradiction. 
So, every even number 
larger than $2$  is a nongap, and thus $\l_r \ge 2r-3, \forall r \ge 3$, and, if 
the equality holds  for $r=g$, it must hold for every $r$ with $3 \le r \le 
g$, which implies that $\sH$ is sparse.
\end{proof}
 
\begin{theorem}\label{charactEven4}
Let $\sH$ be a sparse semigroup having genus $g \geq 4$ and $\l_{g} = 2g - 4$. Then
 $\sH$ is one of the following:
\begin{enumerate}
\item $\sH = 3\N +\sH_{8}$, and $\sH$ is $3$-hyperelliptic;
\item $\sH = 3\N +\sH_{10}$ , and $\sH$ is $4$-hyperelliptic;
\item $\sH = 4\N + \sH_{6}$, and $\sH$ is $2$-hyperelliptic;
\item $\sH = \sH_{4}$, and $\sH$ is $2$-hyperelliptic;
\item $\sH = 5\N +\sH_{6}$, and $\sH$ is $3$-hyperelliptic;
\item $\sH = \{0, 5, 7\} \cup \sH_8$, and $\sH$ is $4$-hyperelliptic;
\end{enumerate}
\end{theorem}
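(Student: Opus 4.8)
The plan is to mimic the case-analysis structure already established in Theorem \ref{charactOdd3}, driven entirely by the leap-count result. Since $\l_g = 2g-4$, we have $\K = 2g - \l_g = 4$, and Theorem \ref{leapthm} (3) gives $S = \K - 1 = 3$ single leaps, with all remaining leaps double. As always $\l_1 = 1, \l_2 = 2$, which is the first single leap. The entire classification will be organized by where the next single leaps fall, equivalently by how the small nongaps $3, 4, 5$ are distributed. The key organizing principle is that the smallest nongap (the multiplicity $m$) forces a lot of structure: once $m \in \sH$, the sparse property combined with closure under addition constrains which integers can be gaps.

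First I would split on the multiplicity. If $3 \in \sH$ (multiplicity $3$), then $3\N \subseteq \sH$, so all multiples of $3$ are nongaps; the gaps are then governed by the two residue classes $1, 2 \pmod 3$, and counting single versus double leaps among the remaining two single leaps pins down exactly how far up the gaps in those classes persist. This should yield the hyperordinary semigroups $3\N + \sH_8$ and $3\N + \sH_{10}$ (cases 1 and 2), and I would verify the even-gap count to confirm they are $3$- and $4$-hyperelliptic respectively. If instead $3 \notin \sH$ but $4 \in \sH$, the second single leap is $(3,4)$-adjacent in the sense that $3$ is a gap; then $4\N \subseteq \sH$ and I would locate the third single leap, producing $4\N + \sH_6$ and the ordinary-type $\sH_4$ (cases 3 and 4). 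Finally, if $3, 4 \notin \sH$ but $5 \in \sH$, the multiplicity is $5$, and the remaining analysis gives $5\N + \sH_6$ and $\{0,5,7\} \cup \sH_8$ (cases 5 and 6).

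At each branch the mechanics are the same: I would use that once the multiplicity $m$ is a nongap, every integer $\geq m$ lying in a ``filled'' residue class modulo $m$ is a nongap, so the only freedom left is how many gaps survive in each class before the conductor; the three single leaps, two of which remain after the forced leap $(1,2)$, then determine these lengths uniquely. Throughout I would use $S = 3$ as a hard budget — every time a nongap is inserted below $\l_g$ in a way that separates two gaps, it either creates a single leap or not, and the total must come out to exactly three. Concretely, for each candidate I would write out $\N \setminus \sH$ explicitly, check it against sparseness and semigroup closure, and read off $g$ and the number of even gaps to label the hyperelliptic type.

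The main obstacle I expect is completeness rather than difficulty: ensuring that the three branches on the multiplicity truly exhaust all sparse semigroups with $\l_g = 2g-4$, and that within each branch no additional configuration of the remaining single leaps has been overlooked. In particular the multiplicity-$5$ case is delicate, since with only the leaps $(1,2)$ forced and two single leaps to place among gaps in several residue classes modulo $5$, one must rule out spurious configurations that would violate either closure (e.g. a putative nongap whose sum with another nongap lands on a gap) or the Frobenius condition $\l_g = 2g-4$. I would handle this by systematically bounding the multiplicity: since $S = 3$ and the first single leap is $(1,2)$, the nongaps $1,2$ and at most a few more small gaps account for the single leaps, forcing $m \leq 5$, after which the finite check above closes the argument.
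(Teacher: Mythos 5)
Your proposal follows essentially the same route as the paper's proof: both start from Theorem \ref{leapthm} to get $S=3$, record the forced single leap $(1,2)$, and then branch on whether $3$, $4$, or $5$ is the multiplicity, using the budget of three single leaps together with sparseness and additive closure (e.g. $8=4+4\in\sH$, $10=5+5\in\sH$) to truncate the candidate gap sets; your bound $m\le 5$ (five initial gaps would already give four single leaps) is exactly the exhaustiveness argument implicit in the paper. There is, however, one concrete bookkeeping slip: you place $\sH_4$ in the branch ``$3\notin\sH$, $4\in\sH$'', but $\sH_4=\{0\}\cup\{n\in\N \,;\, n\ge 5\}$ has $4$ as a gap and multiplicity $5$, so it cannot arise in that branch. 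As the paper's proof shows, the branch $4\in\sH$ yields only case (3), namely $4\N+\sH_6$ (one checks $5,6\notin\sH$, so the last single leap is $(5,6)$ and $8=4+4\in\sH$ caps the gaps at $\{1,2,3,5,6\}$), whereas the branch $3,4\notin\sH$ spends all three single leaps on $(1,2),(2,3),(3,4)$, forces $5\in\sH$, and yields the \emph{three} semigroups (4), (5), (6), the chain of even gaps $6,8$ being cut off by $10=5+5\in\sH$. Since you explicitly commit to writing out $\N\setminus\sH$ and verifying closure in every branch, executing your plan faithfully would self-correct this misassignment and reproduce exactly the paper's list; the error lies in the advertised outcomes of the branches, not in the method.
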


\begin{proof}
Theorem \ref{leapthm} (3) tells us that $S = 3$. Again, $\l_{1} = 1$ and $\l_{2} = 2$, 
which account for one single leap.

If $3 \in \sH$, since $\sH$ is sparse and $S=3$, we must have that
$1, 2, 4, 5, 7, 8 \not\in \sH$ accounting for all $3$ single leaps. Thus, either
$\N \setminus \sH = \{1, 2, 4, 5, 7, 8\}$ and $g = 6$ (1), or
$\N \setminus \sH = \{1, 2, 4, 5, 7, 8, 10\}$ and $g = 7$ (2). 

Otherwise, $\l_{3} = 3$, which accounts for a second single leap. 
If $4 \in \sH$, since $\sH$ is sparse, $5 \not\in \sH$. Notice that $6 \not\in 
\sH$ (otherwise, every even number $n \geq 4$ is in $\sH$, and we would only 
have $2$  single leaps, a contradiction). So we have the remaining single leap 
$(5, 6)$, and all leaps from his point on must be double. On the 
other hand, we should have $8=4+4 \in \sH$. Thus,
$\N \setminus \sH = \{1, 2, 3, 5, 6\}$ (3).

Finally, if $4 \not\in \sH$, then all $3$ single leaps occur on gaps $1, 2, 3, 
4$ and  all other leaps are double. Thus, $5 \in \sH$ and 
the only possibilities are 
$\N \setminus \sH = \{1, 2, 3, 4\}$ (4), $\N \setminus \sH = \{1, 2, 3, 4, 6\}$ (5) and
$\N \setminus \sH = \{1, 2, 3, 4, 6, 8\}$ (6).
\end{proof}
 
Notice that the previous result implies, in particular, that, if $\sH$ is a sparse 
semigroup having genus $g \geq 4$ and $\l_{g} = 2g - 4$, then $g \le 7$. We will
see in the next section (Theorem \ref{genusQuotaSparseTheo}) that it is possible 
to generalize this fact (and the result about quasi-symmetric sparse semigroups)
in the sense that for any fixed $r$, there are 
only a finite number of (sorts of) sparse semigroups, which can be explicitly listed.

\begin{theorem}\label{charactOdd5}
Let $\sH$ be a sparse semigroup having genus $g \geq 5$ and $\l_{g} = 2g - 5$. Then $\sH$ is one of the following:
\begin{enumerate}
\item $\sH = 3\N +\sH_{11}$, and $\sH$ is $4$-hyperelliptic;
\item $\sH = 3\N + \sH_{13}$, and $\sH$ is $4$-hyperelliptic;
\item $\sH = 2(\N \setminus \{1, 3\}) \cup \sH_{2g -4}$, with
 $g \ge 6$, and $\sH$ is $2$-hyperelliptic;
\item $\sH = \{ 0, 5, 7\} \cup \sH_{9}$,  and $\sH$ is $4$-hyperelliptic;
\item $\sH = \{ 0, 5, 7, 10\} \cup \sH_{11}$, and $\sH$ is $4$-hyperelliptic;
\item $\sH = \{ 0, 5, 7, 10, 12\} \cup \sH_{13}$, and $\sH$ is $4$-hyperelliptic;
\item $\sH = \{ 0, 5\} \cup \sH_{7}$, and $\sH$ is $3$-hyperelliptic;
\item $\sH = \{ 0, 5, 8\} \cup \sH_{9}$, and $\sH$ is $3$-hyperelliptic;
\item $\sH = \{ 0, 5, 8, 10\} \cup \sH_{11}$, and $\sH$ is $4$-hyperelliptic;
\item $\sH = 2(\N \setminus \{1, 2\}) \cup \sH_{2g -4}$, with $g \ge 5$, and $\sH$ is $2$-hyperelliptic;
\end{enumerate}
\end{theorem}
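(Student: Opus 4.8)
The plan is to follow the template of the proofs of Theorems \ref{charactOdd3} and \ref{charactEven4}, now with $\K = 2g - \l_g = 5$, so that Theorem \ref{leapthm}(3) gives exactly $S = 4$ single leaps (and $D = g - 5$ double leaps). First I would record the facts valid in every case. If $2 \in \sH$ then every even integer is a nongap, so all gaps are odd and, by sparseness, every leap is double, giving $S = 0$ and contradicting $S = 4$; hence $2 \notin \sH$, and therefore $\l_1 = 1$, $\l_2 = 2$. This accounts for the first single leap $(1,2)$, leaving three to be placed.

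The organising principle is a case analysis on the multiplicity $m$ of $\sH$. Since $1, 2, \ldots, m-1$ are all gaps, they form the initial maximal run of consecutive gaps and contribute $m - 2$ single leaps; as $S = 4$ this forces $3 \le m \le 6$. In each case I would propagate the structure using sparseness (maximal runs of gaps are separated by exactly one nongap) together with closure under addition. For $m = 6$ no single leap remains, so every later leap is double: the even integers $\ge 6$ are all nongaps and the odd integers stay gaps up to $\l_g$, which yields the family $2(\N\setminus\{1,2\})\cup\sH_{2g-4}$ of item (10). For $m = 5$ exactly one single leap remains, and I would split on whether $7 \in \sH$: if $7 \in \sH$ then $6,8,9$ are gaps, the remaining single leap is $(8,9)$, and $\sH$ is controlled by $5,7 \in \sH$, giving items (4)--(6); if $7 \notin \sH$ then $6,7$ are gaps (the single leap $(6,7)$), $8 \in \sH$ is forced by the exhausted budget, and $\sH$ is controlled by $5,8 \in \sH$, giving items (7)--(9). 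For $m = 4$ two single leaps remain; one shows $5,6,7 \notin \sH$, forcing the run $\{5,6,7\}$ and $8 = 4+4 \in \sH$, which produces the family $2(\N\setminus\{1,3\})\cup\sH_{2g-4}$ of item (3). Finally, for $m = 3$ the nongaps below the conductor turn out to be exactly the multiples of $3$, forcing the runs $\{4,5\},\{7,8\},\{10,11\}$ and giving items (1)--(2).

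The main obstacle is establishing finiteness in the cases $m \in \{3,5\}$, i.e.\ bounding $\l_g$ so that only the listed semigroups occur; here closure interacts with the budget $S = 4$. When $m = 3$, the gaps come in consecutive pairs $\{3k+1, 3k+2\}$, each costing one single leap, so at most four such pairs may occur and the conductor is forced to appear by $14$, leaving only items (1) and (2). When $m = 5$, once the last single leap is placed every subsequent leap must be double, with odd integers as gaps; but the generators force odd nongaps not far above the multiplicity --- for instance $15 = 3\cdot 5$, and $13 = 5+8$ in the $\langle 5,8\rangle$ family --- and such an odd nongap cannot sit inside the all-double tail, which caps $\l_g$ and leaves the finitely many Frobenius numbers of items (4)--(9). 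By contrast, for $m \in \{4,6\}$ every nongap past the initial run is even, so the odd-gap tail continues indefinitely; this is precisely why items (3) and (10) are genuine infinite families (with $g \ge 6$ and $g \ge 5$ respectively, coming from the minimal admissible configuration). The remaining routine work is to check that each listed set is additively closed, sparse, of the stated genus and Frobenius number, and to count its even gaps to confirm the asserted hyperellipticity.
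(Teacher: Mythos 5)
Your proposal is correct and takes essentially the same approach as the paper: its sequential membership tests ($3\in\sH$; else $4\in\sH$; else $5\in\sH$ with the $7$-versus-$8$ subsplit; else $5\notin\sH$) are exactly your multiplicity cases $m=3,4,5,6$, with the same single-leap budget $S=4$ from Theorem \ref{leapthm} driving each case. Your finiteness arguments also match the paper's, which caps the all-double tail using the same odd nongaps ($15=3\cdot 5$ and $13=5+8$); the only cosmetic imprecision is that in the $m=3$ case the final gap ($13$ in item (2)) is an unpaired singleton rather than part of a pair $\{3k+1,3k+2\}$, but your ``conductor forced by $14$'' bound already absorbs this, just as the paper's enumeration does.
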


\begin{proof}
The proof technique is very similar to the one in the previous theorem. 

Theorem \ref{leapthm} (3) tells us that there are $S=4$. Again, $\l_{1} = 1$ and $\l_{2} = 2$, which 
account for one single leap.
If $3 \in \sH$, since $\sH$ is sparse, we must have that 
$1, 2, 4, 5, 7, 8, 10, 11 \not\in \sH$, accounting for all $4$ single leaps. Thus, either 
$\N \setminus \sH = \{1, 2, 4, 5, 7, 8, 10, 11\}$ and $g = 8$ (1), or 
$\N \setminus \sH = \{1, 2, 4, 5, 7, 8, 10, 11, 13\}$ and $g = 9$ (2). 

Otherwise, $\l_{3} = 3$, which accounts for a second single leap. If $4 \in \sH$, since $\sH$ is sparse, 
we must have that, for every $n \in 4\N$, $n < \l_{g}$, both $n - 1$ and $n + 1$ are gaps. In particular, 
$5, 7 \not\in \sH$. Notice that $6 \not\in \sH$ (otherwise, every even number $n \geq 4$ is in $\sH$, and 
we would only have $2$ single leaps, a contradiction). So we have other $2$ single leaps 
(as $5, 6, 7 \not\in \sH$), and all leaps from this point on must be double. Thus, 
$\N \setminus \sH = \{1, 2, 3, 5, 6, 7, 9, 11, \ldots, 2g - 5\}$ (and $2g-5 \ge 7$, so $g \ge 6$) (3).

Otherwise, $\l_{4} = 4$, which accounts for a third single leap. If $5 \in \sH$, since $\sH$ is sparse 
and $5 < \l_{g}$, $6 \not\in \sH$ and exactly one of the following possibilities holds: $7 \not\in \sH$ 
or $8 \not\in \sH$ (the two cannot happen simultaneously, as we would then have $6$ single leaps). 

If $7 \in \sH$, then $8 \not\in \sH$ and, since $10=5+5 \in \sH$ and $9 \le \l_{g}$, $9 \not\in \sH$. This 
accounts for the fourth and last single leap of $\sH$. So all leaps from his point on must be double, 
which means that all even numbers greater than $9$ must be in $\sH$. Since $14, 15 \in \sH$ and $\sH$ 
is sparse, $\{n \geq 14\} \subset \sH$. Thus, in this case, we have the following possibilities for 
$\sH$: $\sH = \{0, 5, 7\} \cup \{n \geq 10\}$ (4), $\sH = \{ 0, 5, 7, 10\} \cup \{n \geq 12\}$ (2) and 
$\sH = \{ 0, 5, 7, 10, 12\} \cup \{n \geq 14\}$ (6). 

If $8 \in \sH$, then $7 \not\in \sH$, and all single leaps occur on gaps $1, 2, 3, 4, 6, 7$. Thus, all leaps 
from this point on must be double, which means that all even numbers greater than $7$ must be in 
$\sH$. So $13=5+8 \in \sH$, and thus, in this case, we have the following possibilities for 
$\sH$: $\sH = \{ 0, 5\} \cup \{n \geq 8\}$ (7), $\sH = \{ 0, 5, 8\} \cup \{n \geq 10\}$ (8) and 
$\sH = \{ 0, 5, 8, 10\} \cup \{n \geq 12\}$ (9).

On the other hand, if $5 \not\in \sH$, then all $4$ single leaps occur on gaps $1, 2, 3, 4, 5$ and all other 
leaps are double. Thus, $$\N \setminus \sH = \{1, 2, 3, 4, 5, 7, 9, \ldots, 2g - 5\}$$ 
(here $2g-5 \ge 5$, and so $g \ge 5$) (10).
\end{proof}

An important feature of Theorems \ref{charactOdd3} and \ref{charactOdd5} will be
generalized in the next section (Corollary \ref{genusQuotaSparseOddTheo}): if 
$\sH$ is a sparse semigroup having genus $g$ for which $\l_g=2g-(2r+1)$, where 
$r \in \N$, then, if $g>4r+1$, all nongaps of $\sH$ smaller than $\l_g$ are 
necessarily even. In this case all the $(\l_g+1)/2=g-r$ odd positive integers 
smaller than $\l_g+1$ are gaps, so there are $r$ even gaps (i.e., $\sH$ is 
$r$-hyperelliptic), and the set $\{m \in \N \, ; \, 2m \in \sH\}$ is a semigroup of 
genus $r$.

Theoretically, one could use the proof technique in Theorems \ref{charactOdd3}
and \ref{charactOdd5} (and/or the above remark) to characterize all sparse
semigroups having $\l_{g} = 2g - (2r +1)$ for any fixed $r \in \N$. However, 
as $r$ grows, the number of cases to be analyzed quickly add up, and the
arguments in the proof become more intricate.

\section{Limit sparse semigroups}\label{limitsparse}

Having in mind the results of the previous section, it is only natural for one
to ask about the existence of sparse semigroups of genus
$g \geq 4r-1$ and Frobenius number $\l_g=2g-2r$ (where $\kappa=2r$). 
A preliminary analysis of examples suggests that such semigroups do not exist if
$g > 4r - 1$. 
This fact together with Lemma
\ref{reductionLemma} reinforces the idea that sparse semigroups of genus $g=4r-1$ with
even Frobenius number $\l_{g} = 2g - 2r$ are special. 

\begin{remark}
We note that Theorem \ref{leapthm} assures us that a sparse semigroup has genus $g=2\K-1$ if
 and only if $S = D$.
 \end{remark}

We call a sparse semigroup with as many single
as double leaps (equivalently, of genus $g=2\K-1$) a \emph{limit sparse semigroup}.

Starting from searching for an upper bound to the genus of limit sparse semigroups 
with even Frobenius number, our aim is to analyze more closely the structure of such semigroups 
regardless of its Frobenius number's parity. When needed, we shall
denote $\l_{g} = 2g - 2r$ ($\K=2r$) for even Frobenius numbers, or $\l_{g} = 2g - 2r -1$ ($\K=2r+1$)
for odd Frobenius numbers. 

\begin{lemma}\label{bijSDHLemma}
If $\sH$ is a limit sparse semigroup, then
$\# \{ \sH \cap \{1, 2, 3, \ldots, \l_{g}\} \} = S = D$.
\end{lemma}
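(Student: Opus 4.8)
The plan is to prove the statement by a direct counting argument that partitions the integers in $\{1, 2, \ldots, \l_g\}$ into gaps and nongaps. The key observation is that, by definition of the Frobenius number, every one of the $g$ gaps of $\sH$ satisfies $\l_i \leq \l_g$, so all of them lie in the range $\{1, \ldots, \l_g\}$; conversely no integer exceeding $\l_g$ is a gap. Hence the set $\{1, \ldots, \l_g\}$ decomposes as the disjoint union of these $g$ gaps together with the nongaps of $\sH$ that fall in this range. Since $\{1, \ldots, \l_g\}$ contains exactly $\l_g$ integers, this yields
\[
\#\{\sH \cap \{1, 2, \ldots, \l_g\}\} = \l_g - g.
\]

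Next I would evaluate the right-hand side using the parameter $\K := 2g - \l_g$, which gives $\l_g = 2g - \K$ and therefore $\l_g - g = g - \K$. By Theorem \ref{leapthm}(2) we have $g - \K = D$, so the counted quantity is already equal to $D$. Finally, since $\sH$ is a limit sparse semigroup, we have $g = 2\K - 1$, equivalently $S = D$ (by the preceding remark); alternatively one reads off directly from Theorem \ref{leapthm} that $S = \K - 1 = g - \K = D$. Combining these, $\#\{\sH \cap \{1, 2, \ldots, \l_g\}\} = \l_g - g = D = S$, which is the claim.

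There is no substantive obstacle here: the argument is essentially bookkeeping layered on top of Theorem \ref{leapthm}. The only point requiring a moment's care is the claim that all $g$ gaps lie in $\{1, \ldots, \l_g\}$ and that the interval contains no spurious gaps beyond them — but this is immediate from the fact that $\l_g$ is by definition the largest gap, so the $g$ gaps are counted exactly once and exactly within the interval.
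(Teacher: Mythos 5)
Your proof is correct and follows essentially the same route as the paper: count the $\l_g = 2g-\K$ integers in $\{1,\ldots,\l_g\}$, subtract the $g$ gaps (all of which lie in this interval), and identify the resulting $g-\K$ nongaps with $D=S$ via Theorem \ref{leapthm} and the limit hypothesis $g = 2\K-1$. The only difference is cosmetic: you spell out the bookkeeping (that no gap exceeds $\l_g$, and the chain $\l_g - g = g - \K = D = S$) a bit more explicitly than the paper does.
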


\begin{proof}
Clearly, there are $\l_{g} = 2g - \K$ natural numbers in the set
$\{1, \ldots, \l_{g}\}$, $g$ of which are gaps. So, the $g - \K$ remaining ones are all in
$\sH \cap \{1, \ldots, \l_{g}\}$ and, thus,
$\# \{ \sH \cap \{1, 2, 3, \ldots, \l_{g}\} \} = g - \K = D =  S$.
\end{proof}

\begin{lemma}\label{4isGapLemma}
If $\sH$ is a limit sparse semigroup with even Frobenius number $\l_g=2g - 2r$.
Then $4 \not\in \sH$. 
\end{lemma}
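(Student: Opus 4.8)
The plan is to argue by contradiction, assuming $4 \in \sH$ and exploiting that the two largest gaps of a limit sparse semigroup are forced to be an even pair differing by $2$, one of which must then be a multiple of $4$.

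First I would record the only algebraic input needed: since $\sH$ is additively closed and $4 \in \sH$, every positive multiple of $4$ belongs to $\sH$. Equivalently, \emph{no gap of $\sH$ is divisible by $4$}. This is the key structural consequence of assuming $4 \in \sH$, and the whole contradiction rests on it.

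Next I would pin down the top of the gap sequence. Because $\sH$ is limit sparse we have $g = 2\K - 1$, so the range of indices $i = 2\K - 2, \ldots, g-1$ in Proposition \ref{finalDblLeapsProp} collapses to the single value $i = g-1$. The proposition then yields $\l_g - \l_{g-1} = 2$, i.e. the final leap is a double leap. (Here one checks $g = 4r - 1 \geq 3$ for $r \geq 1$, so that $\l_{g-1}$ is a genuine gap and the hypothesis $g \geq 2\K-1$ of the proposition holds with equality.) Combining the two facts finishes the argument: by hypothesis $\l_g = 2g - 2r$ is even, and since $\l_{g-1} = \l_g - 2$, the gaps $\l_{g-1}$ and $\l_g$ are two consecutive even integers, so exactly one of them is divisible by $4$. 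That one is a gap divisible by $4$, contradicting the first step; hence $4 \notin \sH$.

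I expect no serious obstacle here once Proposition \ref{finalDblLeapsProp} is available. The only points requiring a moment's care are verifying that the proposition's hypothesis is met with equality (so that its conclusion genuinely pertains to the last leap $(\l_{g-1}, \l_g)$ rather than to an empty index range) and noting that the evenness of $\l_g$ — which is exactly the standing assumption on the Frobenius number — is what guarantees that $\l_{g-1}$ and $\l_g$ form a consecutive even pair straddling a multiple of $4$.
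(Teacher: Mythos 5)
Your proof is correct and follows essentially the same route as the paper's: both arguments reduce to the fact that, with $g = 2\K - 1 = 4r-1$, the two largest gaps are $\l_g = 6r-2$ and $\l_{g-1} = 6r-4$, and then locate among the gaps a multiple of $4$, contradicting the assumption $4 \in \sH$. If anything, your observation that two consecutive even integers always contain a multiple of $4$ streamlines the paper's residue analysis of $r$ modulo $4$, which redundantly also invokes the gaps $3r-2$ and $3r-1$.
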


\begin{proof}
We know that $\l_g=6r-2$, $\l_{g-1}=6r-4$, so $3r-2$, $3r-1$ are also gaps.
Assume, by contradiction, that $4 \in \sH$, then, analyzing the residual class
of $r$ modulo $4$, one of the integers $6r-2$, $3r-2$, $3r-2$ is a nongap, a contradiction.
\end{proof}

Let us now see that for each $r$ there is precisely one limit sparse semigroup with
even Frobenius number. First, we state and prove another technical lemma:

\begin{lemma}\label{3isNongapLemma}
Let $\sH$ be a limit sparse semigroup with even Frobenius number
$\l_{g}= 2g-2r = 6r-2$. Then $3 \in \sH$ if and only if $6r-5 \notin \sH$.
\end{lemma}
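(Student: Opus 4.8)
The plan is to prove the two implications separately, observing that $6r-5 = \l_g - 3$, so the statement is really a claim about the Frobenius reflection $x \mapsto \l_g - x$ evaluated at $x=3$.

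The forward implication is immediate from closure: if we had both $3 \in \sH$ and $6r-5 \in \sH$, then $3 + (6r-5) = 6r-2 = \l_g \in \sH$, contradicting that $\l_g$ is a gap. Hence $3 \in \sH$ forces $6r-5 \notin \sH$.

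For the reverse implication I would argue by contraposition, assuming $6r-5 \notin \sH$ together with $3 \notin \sH$ and deriving a contradiction with sparseness. First I record the top of the gap sequence: since $\sH$ is limit sparse with $\l_g = 6r-2$, Proposition \ref{finalDblLeapsProp} gives $\l_{g-1} = 6r-4$, so $6r-4$ is a gap while $6r-3 \in \sH$. The engine of the argument is a translation (reflection) map in the spirit of the proof of Proposition \ref{finalDblLeapsProp}: for every nongap $n$ with $0 \le n \le 6r-6$, both $6r-4-n$ and $6r-5-n$ must be gaps, since otherwise adding $n$ back would place $6r-4$ or $6r-5$ in $\sH$ (the latter excluded by our assumption). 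Thus $n \mapsto (6r-5-n,\,6r-4-n)$ sends each such nongap to a pair of consecutive gaps, i.e.\ to a single leap, and distinct $n$ give distinct leaps, so the map is injective.

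The crux is a cardinality match upgrading this injection to a bijection. By Lemma \ref{bijSDHLemma} there are exactly $2r-1$ nongaps in $\{1,\ldots,\l_g\}$, of which only $6r-3$ exceeds $6r-6$; together with $n=0$ this leaves exactly $2r-1$ admissible values of $n$. On the other hand Theorem \ref{leapthm}(3) gives $S = \K-1 = 2r-1$ single leaps. Having the same finite cardinality, the injection is onto all single leaps. Now combining Lemma \ref{4isGapLemma} ($4 \notin \sH$) with the assumption $3 \notin \sH$, the pairs $(1,2)$, $(2,3)$, $(3,4)$ are genuine single leaps, so by surjectivity they are the images of the nongaps $6r-6$, $6r-7$, $6r-8$ respectively. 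These three consecutive integers would then all be nongaps lying below the conductor $c = 6r-1$, contradicting the sparseness of $\sH$. This contradiction yields $3 \in \sH$, completing the reverse implication. I expect the only delicate points to be the exact bookkeeping of the two cardinalities in the bijection and the degenerate small cases $r=1,2$, where the three forced nongaps collide with the already established gaps $1,2,4$ and the contradiction becomes even more direct.
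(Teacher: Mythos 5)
Your proof is correct and takes essentially the same route as the paper: the reverse implication rests on the identical reflection $n \mapsto (6r-5-n,\,6r-4-n)$ carrying nongaps $n \leq 6r-6$ to single leaps, counted against $S = 2r-1$ from Theorem \ref{leapthm} and Lemma \ref{bijSDHLemma} (with Proposition \ref{finalDblLeapsProp} supplying $\l_{g-1}=6r-4$, which the paper uses implicitly), and your forward direction via $3+(6r-5)=\l_g$ is a mild simplification of the paper's. The only divergence is in the final bookkeeping --- the paper overcounts directly, noting that at least one of the overlapping leaps $(1,2)$, $(2,3)$ lies outside the pairwise disjoint reflection family, which together with $(6r-5,6r-4)$ pushes the single-leap total past $2r-1$, whereas you upgrade the injection to a bijection and contradict sparseness through the forced consecutive nongaps $6r-8,\,6r-7,\,6r-6$ --- an equally valid variant in which the appeal to Lemma \ref{4isGapLemma} is harmless but unnecessary, since the two forced preimages of $(1,2)$ and $(2,3)$ alone already violate sparseness, and you correctly flag the degenerate cases $r=1,2$.
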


\begin{proof}
It is clear that if $3 \in \sH$ then $\sH=3\N+\sH_{6r-2}$, so
$6r-5\notin\sH$.

Now, let us assume that $6r-5 \notin \sH$. Since $S = D$ and because of
Lemma \ref{bijSDHLemma}, there are $2r-2$ nongaps in the interval $[1,6r-5]$.
Then for each such nongap $n \in \sH$, the consecutive numbers $6r-5-n$ and
$6r-4-n$ are gaps, which produces $2r-2$ single leaps. Notice that the single
leaps $(6r-5-n, 6r-4-n)$ are disjoint, for every $n \in \sH$, because of the
sparse property.

Suppose, by contradiction,  that $3 \notin \sH$. Thus $(1,2)$ and $(2,3)$ are
single leaps, and $(6r-5-n,6r-4-n)$ are $2r-2$ disjoint single leaps, for every
$n \in \sH \cap [1,6r-5]$, and so at least one of the two single leaps $(1,2)$
and $(2,3)$ are not of this form. Hence, since $(6r-5,6r-4)$ is also a single
leap, the number of single leaps is bigger than $2r-1$, which is a contradiction.
\end{proof}

\begin{theorem}\label{charactSparseLimTheo}
Let $\sH$ be a limit sparse semigroup  with even Frobenius number
$\l_g=2g-2r$. Then  $\sH= 3\N + \sH_{6r-2}$.
\end{theorem}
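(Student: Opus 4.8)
The plan is to reduce the entire statement to the single assertion $3\in\sH$. Indeed, the opening line of the proof of Lemma \ref{3isNongapLemma} already records that $3\in\sH$ forces $\sH=3\N+\sH_{6r-2}$, and by that same lemma $3\in\sH$ is equivalent to $6r-5\notin\sH$. So, after fixing the numerology for a limit sparse semigroup with even Frobenius number — writing $\K=2r$, so that $g=2\K-1=4r-1$, $\l_g=2g-\K=6r-2$, and, by Theorem \ref{leapthm}, $S=D=2r-1$ — it remains only to prove $3\in\sH$.

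I would argue by contradiction, assuming $3\notin\sH$. Then Lemma \ref{3isNongapLemma} gives $6r-5\in\sH$, while Lemma \ref{4isGapLemma} gives $4\notin\sH$; together with $1,2\notin\sH$ (if $2\in\sH$ then every even number is a nongap and $\K=1$) this shows that $1,2,3,4$ are all gaps, so the multiplicity $m:=n_1$ satisfies $m\ge 5$. I would then collect the structural data I am allowed to use: by Lemma \ref{bijSDHLemma} there are exactly $2r-1$ nongaps in $[1,\l_g]$, their largest being $6r-3$ (the integer squeezed between the last two gaps $\l_{g-1}=6r-4$ and $\l_g=6r-2$), and any two of them are spaced by at least $2$; hence $6r-3\ge m+2(2r-2)$, that is $5\le m\le 2r+1$. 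Finally, since $2(3r-1)=\l_g$ and $2(3r-2)=\l_{g-1}$ are gaps, both $3r-1$ and $3r-2$ are gaps as well.

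The crux — and the step I expect to be the main obstacle — is to derive a contradiction from $m\ge 5$, and here a purely combinatorial leap count is \emph{not} enough: one checks that the run/leap bookkeeping ($2r$ maximal gap-runs carrying $S=2r-1$ single leaps, with $\{\l_g\}$ an isolated gap) stays perfectly consistent for every admissible $m$, so the contradiction must genuinely exploit \emph{additive closure}. The extra inputs are that the multiples $m,2m,3m,\dots$ lying in $[1,\l_g]$ are nongaps, and hence may never coincide with any of the gaps $6r-2,\,6r-4,\,3r-1,\,3r-2$, nor (by the sparse property) lie adjacent to the already-forced nongaps $6r-3$ and $6r-5$. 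I would combine these divisibility and spacing obstructions with the packing constraint that $[1,6r-3]$ must contain exactly $2r-1$ nongaps pairwise at distance $\ge 2$: running through the finitely many admissible multiplicities $5\le m\le 2r+1$, each one either makes some multiple $km$ equal to a forbidden gap (for instance $m\mid 6r-4$), or equal or adjacent to $6r-5$, or else leaves too little room to place all $2r-1$ spaced nongaps. In every case this is a contradiction, forcing $m=3$, i.e. $3\in\sH$.

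Once $3\in\sH$ is established the conclusion is immediate: then $3\N\subseteq\sH$, so the $2r-1$ positive multiples of $3$ in $[1,6r-3]$ are nongaps; since Lemma \ref{bijSDHLemma} says there are exactly $2r-1$ nongaps in $[1,\l_g]$, these are \emph{precisely} the multiples of $3$, and every integer $\ge \l_g+1=6r-1$ is a nongap. Hence $\sH=3\N+\sH_{6r-2}$, as claimed.
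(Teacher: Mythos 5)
Your reduction is sound and matches the paper's skeleton: by Lemma \ref{3isNongapLemma} the theorem is equivalent to showing $6r-5\notin\sH$, and your preliminary bookkeeping is all correct ($g=4r-1$, $\l_g=6r-2$, $S=D=2r-1$; $m\geq 5$ via Lemma \ref{4isGapLemma}; $m\leq 2r+1$ by packing the $2r-1$ nongaps below $6r-3$; $3r-2$ and $3r-1$ gaps). The genuine gap is at the step you yourself flag as the crux. ``Running through the finitely many admissible multiplicities $5\leq m\leq 2r+1$'' is not a finite verification: the range grows with $r$, so a uniform argument is required, and you never supply one --- you only assert that every $m$ is killed by a multiple $km$ hitting one of the gaps $6r-2, 6r-4, 3r-2, 3r-1$, or landing adjacent to $6r-3$ or $6r-5$, or by lack of room. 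That assertion is false as stated. Take $m=5$ with $r\equiv 3\pmod 5$, say $r=8$, $\l_g=46$: no multiple of $5$ up to $46$ equals $46$, $44$, $22$ or $23$, none is adjacent to $43$ or $45$ (indeed $45=6r-3$ is itself a multiple of $5$), and $[5,43]$ has ample room for $15$ nongaps spaced by $2$. The configuration is still impossible, but the contradiction comes from propagating additive chains: any extra nongap $n\equiv 2,3\pmod 5$ forces $n+5, n+10,\dots$ to be nongaps, and every such chain eventually reaches $22$, $23$, or $42$ (adjacent to the nongap $43$), leaving at most $13<15$ nongaps --- a mechanism absent from your obstruction list. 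So your sweep, even if one tried to execute it, would stall exactly where the real work begins.

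For comparison, the paper achieves uniformity in $r$ by introducing a parameter your proposal lacks: assuming $6r-5\in\sH$, there is an $x\geq 3$ such that the \emph{last single leap} is $(6r-2x-1,6r-2x)$, so that in $[6r-2x,6r-2]$ the odd numbers are nongaps and the evens are gaps. One then shows the multiplicity $n_1$ must satisfy either $n_1>2x$ or $n_1=2x-1$, and each case is eliminated: in the first, the $2r-x$ reflected single leaps $(6r-2x-1-n,6r-2x-n)$ for $n\in\sH\cap[1,6r-2x-2]$, together with at least $x-1$ leaps in $[1,n_1-1]$ not of that form and the leap $(6r-2x-1,6r-2x)$ itself, give $2r>S=2r-1$; in the second, the reflected leaps inside $[1,2x-2]$ are forced to be $(1,2),(3,4),\dots,(2x-3,2x-2)$, whence $6r-2x-4\in\sH$, the nongaps in $[2x-1,6r-4x+1]$ are pinned to the residue $2x-1\pmod 3$, so $2x+2\in\sH$ and $(2x+2)+(6r-2x-4)=6r-2=\l_g$ would be a nongap. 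Your instinct that additive closure must be exploited is correct, but identifying \emph{where} --- tying the multiplicity to the position $x$ of the last single leap and then using the mod-$3$ rigidity of the middle interval --- is the actual content of the theorem, and it is missing from your proposal.
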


\begin{proof}
In view of Lemma \ref{3isNongapLemma}, all we have to do is to show that $6r-5$
is a gap. Suppose otherwise. So, there is an integer $x \geq 3$ such that the
last single leap is $(6r-2x-1,6r-2x)$. So, in the interval $[6r-2x,6r-2]$,
all even numbers are gaps and all odd numbers are nongaps.

Let $n_1$ be the multiplicity of $\sH$. We first notice that it cannot be that
$n_1$ is an odd number and $n_1 < 2x-1$. Otherwise, $6r-2-n_1$ would be an
odd gap and $6r-2-n_1 > 6r-2x-1$, a contradiction. Moreover, it cannot be
that $n_1$ is an even number and $n_1 \leq 2x$.
Otherwise, there would be an even nongap, a multiple of $n_1$, between
$6r-2x$ and $6r-2$, which is also a contradiction. Then there are only two
possibilities for $n_1$:
\begin{enumerate}
 \item[(A)] $n_1 > 2x$;
 \item[(B)] $n_1 = 2x-1$.
\end{enumerate}

Since the nongaps in the interval $[6r-2x,6r-2]$ are the odd numbers, there
are $2r-x$ positive nongaps smaller than $6r-2x-1$. For each one of them,
say $n \in \sH \cap [1,6r-2x-2]$, $6r-2x-1-n$ and $6r-2x-n$ are consecutive
gaps, producing $2r-x$ single leaps.

Let us consider each case separately: 
 
Consider (A). In this case, $1,2,3,\ldots,n_1-1$ are consecutive gaps. The single leaps
$(6r-2x-1-n,6r-2x-n)$ are all disjoint for every $n \in \sH$, $n < 6r-2x-1$.
Then, among the single leaps in the interval $[1,n_1-1]$, there are at least
$x-1$ different from the ones in
$\{ (6r-2x-1-n,6r-2x-n) \, ; \, n \in \sH, \, n < 6r -2x-1 \}$. Since
$(6r-2x-1,6r-2x)$ is also a single leap, we have, so far, $(x-1)+(2r-x)+1=2r$
single leaps, which is a contradiction, because $S=2r-1$.

Now, let us suppose (B). Since $S = 2r -1$, there are exactly $x-2$ single leaps in the interval
$[1,n_1-1]$ that are not  in 
$\{ (6r-2x-1-n,6r-2x-n) \, ; \, n \in \sH, \, n < 6r -2x-1 \}$. Thus,
there are $x-1$ single leaps in the interval $[1,2x-2]$ of the type 
$(6r-2x-1-n,6r-2x-n)$, where $n\in\sH\cap[1,6r-2x-2]$, which are 
necessarily $(1,2), (3,4), \ldots, (2x-3,2x-2)$. Hence  
$6r-4x+2, \ldots, 6r-2x-4, 6r-2x-2 \in \sH$. In particular, since
$x \ge 3$, $6r-2x-4 \in \sH$. 

Now, consider the interval $[2x-1,6r-4x+1]$ which has $3(2r-2x+1)\ne 0$
 integers and contains $2r-x-(x-1)=2r-2x+1$ nongaps. For each nongap 
$n \in \sH \cap [2x-1,6r-4x+1]$, we consider the consecutive gaps 
$6r-2x-1-n$ and $6r-2x-n$, which both belong to $[2x-1,6r-4x+1]$ (and 
thus are $2r-2x-1$ single leaps contained in this interval). Then, since
 $\sH$ is sparse, the nongaps in the interval $[2x-1,6r-4x+1]$ are 
necessarily the integers congruent to $2x-1 \pmod 3$. Hence, $2x+2\in \sH$
(this is clear when the interval $[2x-1,6r-4x+1]$ has more than $3$ elements
-- it has at least $3$ elements, since $3(2r-2x+1)$ is a positive multiple
of $3$; if $3(2r-2x+1)=3$, then $2x+2=6r-4x+2 \in \sH$). Thus, in this case,
$(2x+2)+(6r-2x-4)=6r-2=\l_g$ would be a nongap, a contradiction.

So, in any case, we arrive at a contradiction, and conclude that $6r-5$ is a
gap, which implies, by Lemma \ref{3isNongapLemma}, that $3 \in \sH$ and it is
straightforward to see that $\sH= 3\N + \sH_{6r-2}$.
\end{proof}

\begin{corollary}
If $\sH$ is a limit sparse semigroup with even Frobenius
number $\l_g=2g-2r$, then
$\sH$ is an Arf semigroup. 
\end{corollary}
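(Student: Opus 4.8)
The plan is to lean entirely on the explicit classification already in hand: Theorem \ref{charactSparseLimTheo} tells us that $\sH = 3\N + \sH_{6r-2}$, so the whole content of the corollary is to check that this one explicit semigroup satisfies the Arf property. First I would unwind the Minkowski sum. Since $\sH_{6r-2} = \{0\} \cup \{n \in \N : n \geq 6r-1\}$ and $3\N = \{0,3,6,\dots\}$, adding a multiple of $3$ to any integer $\geq 6r-1$ keeps it $\geq 6r-1$, so $\sH = 3\N \cup \{n \in \N : n \geq 6r-1\}$. The structural observation I would isolate as the engine of the proof is that the conductor is $c = 6r-1$ and that $\sH \cap \{0,1,\dots,6r-2\}$ consists exactly of the multiples of $3$; in other words, below the conductor the only nongaps are multiples of $3$, while at and above the conductor every integer is a nongap.

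Next I would verify the Arf condition in the form given in the definition, namely $n_i + n_j - n_k \in \sH$ whenever $n_i \geq n_j \geq n_k$. Writing $a = n_i$, $b = n_j$, $c = n_k$, so that $a \geq b \geq c \geq 0$, the key inequality is $a + b - c \geq a \geq b \geq c$, obtained from $b \geq c$. I would then split into two cases according to whether $a + b - c$ lands above or below the conductor. If $a + b - c \geq 6r - 1$, it lies in the part $\{n \geq 6r-1\} \subseteq \sH$ and we are done immediately. If instead $a + b - c < 6r - 1$, then the chain of inequalities forces $a, b, c \leq a + b - c < 6r - 1$, so all three nongaps lie below the conductor and are therefore multiples of $3$; hence $a + b - c$ is itself a multiple of $3$ and lies in $3\N \subseteq \sH$. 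In either case $a + b - c \in \sH$, which is exactly the Arf condition.

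Since this covers all triples of nongaps, $\sH$ is Arf. I do not anticipate a genuine obstacle here: the substantive work was already carried out in Theorem \ref{charactSparseLimTheo}, and what remains is a short threshold argument. The one point requiring care is the direction of the inequalities — one must use $b \geq c$ (rather than $a \geq b$) to deduce $a + b - c \geq a$, so that falling below the conductor genuinely pins all three inputs into the multiples-of-$3$ regime; this is what makes the two cases exhaustive.
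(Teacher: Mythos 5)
Your proposal is correct and matches the paper's (implicit) argument: the corollary is stated without proof precisely because it follows immediately from Theorem \ref{charactSparseLimTheo}, the point being that $3\N+\sH_{6r-2}=3\N\cup\{n\in\N \,;\, n\geq 6r-1\}$ is ``clearly Arf'' (compare Remark \ref{nonArf}, where the analogous claim for $3\N+\sH_{6r+1}$ is asserted in the same spirit). Your case split at the conductor, using $b\geq c$ to get $a+b-c\geq a$ so that the subconductor case pins all three nongaps into $3\N$, is exactly the routine verification the paper leaves to the reader.
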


We now have a tight bound for the genus of a sparse semigroup with even Frobenius
number, greatly improving that by Munuera, Torres and Villanueva
($g\leq 6r-n_1$ if $g \geq 4r-1$ \cite[Theorem 3.1]{MunueraTorresVillanueva}):

\begin{corollary}\label{genusQuotaSparseTheo}
Let $\sH$ be a sparse semigroup of genus $g$ with even Frobenius number $\l_g=2g-2r$.
Then $g \leq 4r - 1$.
\end{corollary}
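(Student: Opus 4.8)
The plan is to argue by contradiction: I assume $\sH$ has genus $g \geq 4r$ and derive an impossibility, which forces $g \leq 4r-1$. Since here $\K = 2r$, the assumption means $g = 2\K + j$ with $j = g - 4r \geq 0$, so Lemma \ref{reductionLemma} applies and embeds $\sH$ as a subsemigroup of a limit sparse semigroup $\widetilde{\sH}$ of genus $2\K - 1 = 4r - 1$ and even Frobenius number $3\K - 2 = 6r - 2$. Theorem \ref{charactSparseLimTheo} then pins $\widetilde{\sH}$ down completely: $\widetilde{\sH} = 3\N + \sH_{6r-2}$. The only feature of this I will actually use is that every nongap of $\widetilde{\sH}$ below $6r - 1$ is a multiple of $3$; since $\sH \subseteq \widetilde{\sH}$, the same holds for the nongaps of $\sH$ below $6r-1$.

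Next I would locate the top gaps of $\sH$. Because $g \geq 2\K - 1$, Proposition \ref{finalDblLeapsProp} forces the gaps from index $2\K - 2$ onward to be spaced by $2$; since $\l_g = 2g - 2r = 6r + 2j$, stepping down by $2$ shows that these top gaps are exactly the even numbers $6r - 4,\, 6r - 2,\, 6r,\, \ldots,\, 6r + 2j$. In particular $6r$ is a gap of $\sH$, and the nongaps of $\sH$ lying in $[6r - 1, \l_g]$ are precisely the $j + 1$ odd numbers $6r - 1,\, 6r + 1,\, \ldots,\, 6r + 2j - 1$.

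Now I count. By Theorem \ref{leapthm} there are $D = g - \K = 2r + j$ nongaps of $\sH$ in $[1, \l_g]$. Discarding the $j + 1$ of them that lie in $[6r - 1, \l_g]$ leaves exactly $2r - 1$ nongaps in $[1, 6r - 2]$, each a multiple of $3$ by the first paragraph. But $[1, 6r - 2]$ contains exactly $2r - 1$ multiples of $3$, namely $3, 6, \ldots, 6r - 3$, so every one of them must be a nongap; in particular $3 \in \sH$ and $6r - 3 \in \sH$. This yields the contradiction I am after: additive closure would force $3 + (6r - 3) = 6r \in \sH$, yet $6r$ was just shown to be a gap. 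Hence no sparse semigroup with even Frobenius number has genus $g \geq 4r$, i.e.\ $g \leq 4r - 1$.

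The main obstacle is bookkeeping rather than a deep idea: I must verify that the reduction lemma genuinely applies (its hypothesis $g \geq 2\K$ is exactly the contradiction assumption), that Theorem \ref{charactSparseLimTheo} is invoked with the matching parameters so that $\widetilde{\sH} = 3\N + \sH_{6r-2}$, and that the two counts — the top nongaps from Proposition \ref{finalDblLeapsProp} and the global count $D = 2r + j$ from Theorem \ref{leapthm} — are reconciled so as to squeeze every bottom nongap onto the multiples of $3$. Once those indices line up, the closure contradiction $3 + (6r - 3) = 6r$ is immediate.
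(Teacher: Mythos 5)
Your proof is correct and takes essentially the same route as the paper: assume $g = 4r + j$ with $j \geq 0$, reduce via Lemma \ref{reductionLemma} to a limit sparse semigroup $\widetilde{\sH} \supseteq \sH$, pin it down as $3\N + \sH_{6r-2}$ by Theorem \ref{charactSparseLimTheo}, and contradict the fact that $6r = \l_{g-j}$ is a gap of $\sH$. The only divergence, which is harmless and in fact slightly more self-contained, is how you obtain $3 \in \sH$ (and $6r-3 \in \sH$): the paper reads off $\{n < 6r\} \cap \sH = \{n < 6r\} \cap \widetilde{\sH}$ from the explicit construction inside the \emph{proof} of Lemma \ref{reductionLemma}, whereas you use only the stated containment $\sH \subseteq \widetilde{\sH}$ together with the count $\l_g - g = g - \K = 2r + j$ of nongaps in $[1, \l_g]$ and the $j+1$ top odd nongaps from Proposition \ref{finalDblLeapsProp}, which forces all $2r-1$ multiples of $3$ in $[1, 6r-2]$ to lie in $\sH$.
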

\begin{proof}
Suppose $\sH$ is a sparse semigroup of genus $g = 4r+ j$, $j \geq 0$, with 
$\l_g=2g-2r$. Then, by Lemma \ref{reductionLemma}, there exists a sparse 
semigroup $\widetilde{\sH}$ of genus $\widetilde{g} = 4r - 1$ and 
$\l_{\widetilde{g}} = 6r - 2$ such $\sH$ is a subsemigroup of $\widetilde{\sH}$. 
Theorem \ref{charactSparseLimTheo} tells us that $\widetilde{\sH} =3\N+\sH_{6r-2}$.
 By the construction of $\widetilde{\sH}$ in the proof of 
Lemma \ref{reductionLemma}, we see that $\{n \in \N \, ; \, n < 6r \} \cap \sH = 
\{n \in \N \, ; \,  n < 6r \} \cap \widetilde{\sH}$; in particular, $3 \in \sH$ 
and thus $6r \in \sH$, which contradicts the fact that $6r = \l_{4r} = \l_{g - 
j}$ is a gap of $\sH$.
\end{proof}

Let us now analyze some constraints on limit sparse semigroups having
odd Frobenius number. 

\begin{theorem}\label{oddFrobEvenMultTheo}
Let $\sH$ be a limit sparse semigroup with odd Frobenius number
$\l_{g}= 2g - (2r+1) = 6r+1$. If the multiplicity $n_1$ of $\sH$ is even, then
every nongap smaller then $\l_g$ is even, and so $\sH$ is $r$-hyperelliptic.
\end{theorem}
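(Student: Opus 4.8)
The plan is to fix the numerology forced by the hypotheses and then combine the reflection technique used in the proofs of Lemma~\ref{3isNongapLemma} and Theorem~\ref{charactSparseLimTheo} with the parity rigidity coming from the even multiplicity. Since $\sH$ is a limit sparse semigroup with $\l_g=6r+1$, Theorem~\ref{leapthm} gives $g=4r+1$, $\K=2r+1$ and $S=D=2r$, while Lemma~\ref{bijSDHLemma} yields exactly $2r$ nongaps in $[1,6r]$. By Proposition~\ref{finalDblLeapsProp} the top leaps are double, so $\l_{g-1}=6r-1$ and hence $6r\in\sH$, which is even. The sparse condition forces every nongap below the conductor to be isolated (flanked by gaps). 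I would record at the outset the basic reflection fact: if $n$ is a positive nongap with $n<6r+1$, then $6r+1-n$ is a gap, for otherwise $6r+1=n+(6r+1-n)\in\sH$.

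The first structural step is to locate the odd nongaps. Let $(N,N+1)$ be the single leap with $N$ largest; since every leap above $N+1$ is double and $\l_g=6r+1$ is odd, one checks that $N+1$ is odd, $N$ is even, and the nongaps above $N$ are exactly the even numbers $N+2,N+4,\dots,6r$. In particular every odd nongap is smaller than $N$. I would then promote the reflection fact to single leaps: if $(M,M+1)$ is a single leap and $n<M$ is a nongap, then $(M-n,M+1-n)$ is again a single leap, since its two entries are gaps (their sums with $n$ equal the gaps $M$ and $M+1$); by sparseness these reflected leaps are pairwise disjoint and disjoint from $(M,M+1)$.

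Now I would argue by contradiction, assuming $\sH$ has an odd nongap and letting $Q$ be the largest one. Because $6r-1=\l_{g-1}$ and $6r+1=\l_g$ are gaps, $Q\le 6r-3$; moreover $Q+n_1\in\sH$ is odd and larger than $Q$, so it must be the next odd nongap, forcing $Q+n_1\ge 6r+3$ and hence $n_1\ge 6$. Since $Q$ is the largest odd nongap and is isolated, both $Q+1$ (even) and $Q+2$ (odd, exceeding $Q$) are gaps, so $(Q+1,Q+2)$ is a single leap. Reflecting all nongaps $n\le Q$ across this leap produces that many disjoint single leaps inside $[1,Q+2]$, which, together with $(Q+1,Q+2)$ itself, must fit within the budget $S=2r$.

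The final and most delicate step is to turn this count into an explicit list of nongaps below $Q$. As in case~(B) of the proof of Theorem~\ref{charactSparseLimTheo}, I expect the disjoint reflected single leaps to be forced to be $(1,2),(3,4),\dots$ on an initial interval governed by $n_1$, after which a congruence argument modulo $3$ on a middle interval (again forced by sparseness together with the reflected leaps) pins down the remaining nongaps up to their residues. The even parity of $n_1$ is exactly what makes this localization rigid and separates the present case from the odd-multiplicity case treated later. From this explicit description I would extract a small nongap $a$ and a large nongap $b$ with $a+b$ equal to an odd number in the top region $[N+1,6r+1]$ --- for instance $a+b=6r-1$ or $a+b=6r+1$ --- which is a gap, contradicting the closure of $\sH$. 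This shows there is no odd nongap, so every nongap below $\l_g$ is even; the odd numbers $1,3,\dots,6r+1$ are then all gaps ($3r+1$ of them), so among the $g=4r+1$ gaps exactly $r$ are even, i.e.\ $\sH$ is $r$-hyperelliptic. I expect the main obstacle to be precisely the middle-interval congruence bookkeeping together with the accompanying multiplicity subcases, where the structure of $\sH$ below $Q$ must be controlled tightly enough to produce the forbidden sum.
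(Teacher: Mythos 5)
Your setup coincides with the paper's opening moves: the numerology $g=4r+1$, $S=D=2r$ (Theorem \ref{leapthm} and Lemma \ref{bijSDHLemma}), the observation that the largest odd nongap $Q$ must lie in the top window (because $Q+n_1$ is again an odd nongap, forcing $Q>\l_g-n_1$), the single leap $(Q+1,Q+2)$, and the disjoint reflected single leaps $(Q+1-n,Q+2-n)$. But the proof is not complete: the decisive step is explicitly deferred to a speculative plan (``I expect the disjoint reflected single leaps to be forced to be $(1,2),(3,4),\dots$\,'', followed by a congruence argument modulo $3$ and the extraction of nongaps $a,b$ with $a+b$ an odd gap), modeled on case (B) of the proof of Theorem \ref{charactSparseLimTheo}. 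You never justify that the reflected leaps tile an initial interval in this setting, never carry out the mod-$3$ bookkeeping, and never exhibit the forbidden sum; as written, the argument stops at ``must fit within the budget $S=2r$'', which by itself yields no contradiction, since the number of nongaps $\le Q$ can be well below $2r$. This is a genuine gap, not a presentational one: the entire contradiction lives in the unexecuted part.

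The paper's own proof shows the structural classification you anticipate is unnecessary; it finishes with a one-shot count in which the even multiplicity does all the work. Since nongaps below the conductor are spaced at least $2$ apart and the odd nongap $\tilde n=Q$ sits in the window $(\l_g-n_1,\l_g]$ with $\tilde n+1,\tilde n+2$ both gaps, that window contains fewer than $n_1/2$ nongaps, hence there are at least $2r-n_1/2+1$ nongaps $n<\l_g-n_1$, each reflecting to a disjoint single leap $(\tilde n+1-n,\tilde n+2-n)$. The ingredient missing from your count is the initial block $[1,n_1-1]$: every integer there is a gap, so it carries $n_1-2$ single leaps, of which at most $n_1/2-1$ can be of reflected type (those are pairwise disjoint), leaving at least $n_1/2-1$ further single leaps. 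Adding $(\tilde n+1,\tilde n+2)$ itself gives $(2r-n_1/2+1)+(n_1/2-1)+1=2r+1>2r=S$, a contradiction. Note that evenness of $n_1$ enters only twice --- to make $Q+n_1$ odd (placing $Q$ in the top window) and to make $n_1/2$ an integer in the two complementary counts --- whereas your plan tries to route it through a rigidity/classification argument that belongs to the odd-multiplicity analysis of Theorem \ref{oddFrobOddMul} and is neither needed nor established here.
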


\begin{proof}
Assume there is an odd nongap $x \in \sH$, $x < \l_g$. Let $\tilde n$ be the
largest odd nongap smaller than $\l_g$. The interval $(\l_g-n_1,\l_{g}]$ 
contains a complete residue system of the integers module $n_1$, so
$\tilde n$ satisfies $\l_g-n_1<\tilde n<\l_g$. By construction, $\tilde n+1$ and 
$\tilde n+2$ are consecutive gaps. So, since $\sH$ is sparse, there are less 
than $n_1/2$ nongaps in the interval $(\l_{g}-n_1,\l_{g}]$, thus there are at 
least $2r - n_1/2 +1$ positive nongaps smaller than $\l_{g}-n_1$.
Define $\Gamma:=\{n\in\sH\ ; 0 < n < \l_g-n_1\}$.  Being $\tilde n+1$ and 
$\tilde n+2$ consecutive gaps, for each $n\in\Gamma$, $\tilde n+1-n$ and 
$\tilde n+2-n$ are also consecutive gaps. But in the interval $[1,n_1-1]$ 
there are at least $n_1/2-1$ single leaps distinct from each 
$(\tilde n+1-n,\tilde n+2-n)$. By considering also the single leap 
$(\tilde n+1,\tilde n+2)$ we already count a number of 
$(2r - n_1/2 + 1) + (n_1/2-1) + 1 = 2r + 1$ single leaps, a contradiction. Now, the odd 
gaps are all the odd numbers between $1$ and $\l_g=6r+1$, and so there are 
$3r + 1$ odd gaps and $r$ even gaps, then $\sH$ is $r-$hyperelliptic.
\end{proof}

\begin{theorem}\label{oddFrobOddMul}
Let $\sH$ be a limit sparse semigroup with odd Frobenius number $\l_{g}= 2g - (2r+1) = 6r+1$.
If the multiplicity $n_1$ of $\sH$ is odd, then $\sH$ is one of the following:
\begin{enumerate}
\item $\sH = 3\N +\sH_{6r+1}$; 
\item $\sH = \langle 2j+1; j \in \N, r \le j \le 2r-1\rangle \cup \sH_{6r+1}$, with $r>1$.
\end{enumerate}
\end{theorem}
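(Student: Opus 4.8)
The plan is to first extract the rigid combinatorial skeleton forced by the limit condition, and then to split according to whether $3$ is a nongap. Writing $\K = 2r+1$, the identity $g = 2\K - 1$ gives $g = 4r+1$ and $\l_g = 6r+1$, while Theorem \ref{leapthm} yields $S = D = 2r$. The first and, I expect, decisive structural observation is that \emph{every positive nongap less than $\l_g$ is isolated}, i.e. both of its neighbours are gaps. Indeed, each double leap $(\l_i,\l_i+2)$ skips exactly one nongap $\l_i+1 \le 6r$, distinct double leaps skip distinct nongaps, and conversely a positive nongap $m \le 6r$ is skipped precisely when $m-1$ and $m+1$ are both gaps; hence $D$ equals the number of isolated positive nongaps $\le 6r$. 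Since Lemma \ref{bijSDHLemma} gives exactly $2r = D$ positive nongaps in $[1,\l_g]$, all of them must be isolated. In particular no two positive nongaps below $\l_g$ are consecutive, and for the largest odd nongap $\tilde n$ the pair $(\tilde n+1,\tilde n+2)$ is automatically a single leap (isolation makes $\tilde n+1$ a gap, maximality makes the odd number $\tilde n+2 \le \l_g$ a gap).

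I would next bound the multiplicity. The first $n_1-1$ gaps are $1,\ldots,n_1-1$, producing $n_1-2$ single leaps; since $S=2r$ this forces $n_1 \le 2r+2$, and as $n_1$ is odd, $n_1 \le 2r+1$. If $n_1=3$, then $3\N \subseteq \sH$, so the multiples of $3$, namely $3,6,\ldots,6r$, already supply $2r$ positive nongaps $\le 6r$; by Lemma \ref{bijSDHLemma} these are all of them, whence $\sH = 3\N + \sH_{6r+1}$, which is conclusion $(1)$.

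It remains to treat $n_1 \ge 5$ (equivalently $3 \notin \sH$) and show this forces conclusion $(2)$ with $n_1 = 2r+1$. Here I would run the reflection technique of Theorem \ref{oddFrobEvenMultTheo}, but now extracting an equality rather than a contradiction. For every positive nongap $n < \tilde n$ the translate $(\tilde n+1-n,\tilde n+2-n)$ is again a single leap, since a nongap among $\{\tilde n+1-n,\tilde n+2-n\}$ would push $\tilde n+1$ or $\tilde n+2$ into $\sH$; these translates, together with the leap $(\tilde n+1,\tilde n+2)$ and the leaps of the initial run $1,\ldots,n_1-1$, must exhaust the $2r$ single leaps. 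Carrying out this count, while using that the sum of two nongaps can never equal the gap $\l_g$, should force the odd nongaps to be exactly the $r$ consecutive odd numbers $2r+1,2r+3,\ldots,4r-1$; in particular $n_1 = 2r+1$. Closure under addition then produces the even nongaps $4r+2,4r+4,\ldots,6r$ as pairwise sums, and since these are $r$ in number the total reaches $2r$, leaving no room for further nongaps. Thus $\sH = \langle 2j+1 \, ; \, r \le j \le 2r-1\rangle \cup \sH_{6r+1}$, which is conclusion $(2)$ (with $r>1$, since $r=1$ collapses back into conclusion $(1)$).

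The main obstacle is precisely this last count. Unlike the even-multiplicity case, where the mere existence of one odd nongap already over-produces single leaps and yields a contradiction, here odd nongaps \emph{must} exist, so the argument has to pin down their exact positions. The delicate points are bounding the number of nongaps in the top window $(\l_g-n_1,\l_g)$, disentangling which reflected leaps coincide with those of the initial run (to avoid double counting), and ruling out ``gaps in the progression'' of odd nongaps so that $\{2r+1,\ldots,4r-1\}$ occurs in full; each of these is controlled by the isolation property together with the constraint that no two nongaps sum to $\l_g$.
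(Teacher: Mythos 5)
Your preliminaries are correct, and in places cleaner than the paper's: the identity $D=\#\{\text{isolated positive nongaps}\le 6r\}$ combined with Lemma \ref{bijSDHLemma} (forcing every positive nongap below $\l_g$ to be isolated), the bound $n_1\le 2r+1$, and the one-line disposal of $n_1=3$ (the paper instead reaches case (1) through the implication ``$6r-2\notin\sH\Rightarrow 3\in\sH$'', proved by a reflection count). But the theorem is decided precisely at the step you leave open and yourself call ``the main obstacle'', and a pure leap census cannot do what you hope it ``should''. The paper parametrizes the situation by the \emph{last single leap} $(6r-2x,6r-2x+1)$, $x\ge 2$ (meaningful once $6r-2\in\sH$), proves $n_1\ge 2x+1$ (otherwise $\l_g-n_1$ would be an even gap above $6r-2x$), eliminates $n_1>2x+1$ by a count reaching $2r+1$ single leaps, and in the remaining case $n_1=2x+1$ the census is \emph{exactly saturated} for every $2\le x\le r$: it only identifies the reflected leaps inside $[1,2x]$ as $(1,2),(3,4),\dots,(2x-1,2x)$, whence the odd nongaps $6r-4x+1,\dots,6r-2x-1$. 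Nothing in isolation plus counting distinguishes $x=r$ (your desired $n_1=2r+1$ with odd nongaps $2r+1,\dots,4r-1$) from the configurations with $2\le x<r$, which satisfy every one of your numerical constraints.

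To exclude $x<r$ the paper needs an additional structural mechanism that is entirely absent from your sketch: the interval $[2x+1,6r-4x]$ has $6(r-x)$ integers and contains $2r-2x$ nongaps, each contributing a disjoint pair of consecutive reflected gaps inside the same interval, so the interval is tiled with period $3$ and its nongaps are forced to be $\equiv 2x+1 \pmod{3}$; in particular $2x+4\in\sH$ whenever $r>x$, and then $(2x+4)+(6r-2x-3)=6r+1=\l_g$ would be a nongap, a contradiction. Your invocation of ``no two nongaps sum to $\l_g$'' is indeed the right tool, but you never exhibit the summing pair --- producing it \emph{is} the mod-$3$ tiling argument, and it is the heart of the proof. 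A secondary logical slip: your assertion that the initial-run leaps, the reflections about $\tilde n$, and $(\tilde n+1,\tilde n+2)$ ``must exhaust'' the $2r$ single leaps is not available a priori (nongaps above $\tilde n$ do exist in case (2), so these families need not visibly cover everything); the paper only ever uses the safe direction, namely that distinct members of such families bound $S$ from below. In sum, the proposal is a sound strategy outline in the spirit of the paper's reflection technique, but its decisive classification step is missing.
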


\begin{proof}
Let us first show that, if $6r-2 \notin \sH$, then $3 \in \sH$ 
(and so $\sH = 3\N +\sH_{6r+1}$). If $6r-2 \notin \sH$, 
then, since $S = D$ and because of Lemma \ref{bijSDHLemma}, there are $2r-1$ 
nongaps in the interval $[1,6r-2]$. Then for each such nongap $n \in \sH$, the 
consecutive numbers $6r-2-n$ and $6r-1-n$ are gaps, which produces $2r-1$ single 
leaps. Notice that the single leaps $(6r-2-n, 6r-1-n)$ are all disjoint, for every
$n \in \sH$, because of the sparse property.

Suppose, by contradiction,  that $3 \notin \sH$. Thus $(1,2)$ and $(2,3)$ are single
leaps, and $(6r-2-n,6r-1-n)$ are $2r-1$ disjoint single leaps, for every 
$n \in \sH \cap [1,6r-2]$, and so at least one of the two single leaps $(1,2)$ and 
$(2,3)$ are not of this form. Hence, since $(6r-2,6r-1)$ is also a single leap, the 
number of single leaps is bigger than $2r$, which is a contradiction.

Suppose now that $6r-2$ is a nongap. So, there is an integer $x \geq 2$ such that 
the last single leap is $(6r-2x,6r-2x+1)$. So, in the interval $[6r-2x+1,6r+1]$, 
all odd numbers are gaps and all even numbers are nongaps.

First notice that it cannot be that $n_1 < 2x+1$. Otherwise, $6r+1-n_1$ would be an 
even gap and $6r+1-n_1 > 6r-2x$, a contradiction. Then there are only two possibilities 
for $n_1$:
\begin{enumerate}
 \item[(A)] $n_1 > 2x+1$;
 \item[(B)] $n_1 = 2x+1$.
\end{enumerate}

Since the nongaps in the interval $[6r-2x+1,6r+1]$ are the even numbers, there are $2r-x$
positive nongaps smaller than $6r-2x$. For each one of them, say 
$n \in \sH \cap [1,6r-2x-1]$, $6r-2x-n$ and $6r-2x+1-n$ are consecutive gaps, producing 
$2r-x$ single leaps.

Let us consider each case separately: 

If we assume (A), then $1,2,3,\ldots,n_1-1$ are consecutive gaps. The single leaps 
$(6r-2x-n,6r-2x+1-n)$ are all disjoint for every $n \in \sH$, $n < 6r-2x$. Then, among the
single leaps in the interval $[1,n_1-1]$, there are at least $x$ different from the ones in
$\{ (6r-2x-n,6r-2x+1-n) \, ; \, n \in \sH, \, n < 6r - 2x \}$. Since $(6r-2x,6r-2x+1)$ is 
also a single leap, we have, so far, $x+(2r-x)+1=2r+1$ single leaps, which is a contradiction, 
because $S=2r$. 

Let us assume (B).
Since $S = 2r$, there are exactly $x-1$ single leaps in the interval $[1,n_1-1]$ that are
not  in $\{ (6r-2x-n,6r-2x+1-n) \, ; \, n \in \sH, \, n < 6r -2x \}$. Thus, there are $x$
single leaps in the interval $[1,2x]$ of the type $(6r-2x-n,6r-2x+1-n)$, where 
$n\in\sH\cap[1,6r-2x-1]$, which are necessarily $(1,2), (3,4), \ldots, (2x-1,2x)$. Hence
$6r-4x+1, \ldots, 6r-2x-3, 6r-2x-1 \in \sH$. In particular, since $x \ge 2$, $6r-2x-3 \in \sH$,
and, by the sparse property, $6r-4x$ is a gap. 

Now, consider the interval $[2x+1,6r-4x]$ which has $6(r-x)$ integers and contains 
$2r-x-x=2r-2x$ nongaps. For each nongap $n \in \sH \cap [2x+1,6r-4x]$, we consider the 
consecutive gaps $6r-2x-n$ and $6r-2x+1-n$, which both belong to $[2x-1,6r-4x+1]$ (and
thus are $2r-2x$ single leaps contained in this interval). Then, since $\sH$ is sparse,
the nongaps in the interval $[2x+1,6r-4x]$ are necessarily the integers congruous to 
$2x+1 \pmod 3$. Hence, provided $r-x \neq 0$ (in which case the interval $[2x+1,6r-4x]$
has $6(r-x) \ge 6$ integer elements), we have $2x+4\in \sH$. Thus, in this case, 
$(2x+4)+(6r-2x-3)=6r+1=\l_g$ would be a nongap, a contradiction. Thus we must have $r-x=0$,
i.e., $x=r$, and so $6r-4x+1=2x+1$ and $6r-2x+2=4x+2$. In this case, 
$\sH = \{0\}\cup \{2j+1; j \in \N, r \le j \le 2r-1\} \cup \{2j; j \in \N, 2r+1 \le j \le 3r\}\cup \{n \in \N \, ; \, n \geq 6r+2\}$,
which is clearly the semigroup $\sH = \langle 2j+1; j \in \N, r \le j \le 2r-1\rangle \cup \sH_{6r+1}$. 
\end{proof}

\begin{remark}\label{nonArf}
The semigroups of the form $\sH = 3\N + \sH_{6r+1}$ are clearly Arf,
but the semigroups of the form 
$\sH = \langle 2j+1; j \in \N, r \le j \le 2r-1\rangle + \sH_{6r+1}$
with $r>1$ are not Arf, since $4r-3, 4r-1 \in \sH$, but $2.(4r-1)-(4r-3)=4r+1 \notin \sH$.
\end{remark}

\begin{corollary}\label{genusQuotaSparseOddTheo}
Let $\sH$ be a sparse semigroup of genus $g$ with $\l_g=2g-(2r+1)$. Then either $g \leq 4r + 1$, or
all nongaps smaller than $\l_g$ are even.
\end{corollary}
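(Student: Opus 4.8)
The plan is to follow the template of Corollary~\ref{genusQuotaSparseTheo}, but replacing the unique even limit semigroup by the full classification of odd limit sparse semigroups. Assume $g>4r+1$ (otherwise the first alternative holds). Since $\K=2r+1$ here, this means $g=2\K+j$ with $j\ge 0$, so Lemma~\ref{reductionLemma} produces a limit sparse semigroup $\widetilde{\sH}$ of genus $\widetilde g=2\K-1=4r+1$ and Frobenius number $\l_{\widetilde g}=3\K-2=6r+1$ with $\sH\subseteq\widetilde{\sH}$. By the construction in that lemma, $\widetilde{\sH}$ is obtained from $\sH$ by turning its last $j+1$ gaps into nongaps, and these gaps all exceed $6r+1$; hence $\sH$ and $\widetilde{\sH}$ coincide on $\{1,\dots,6r+1\}$. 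I would then split according to whether the multiplicity $n_1$ of $\widetilde{\sH}$ is odd or even, invoking Theorems~\ref{oddFrobOddMul} and~\ref{oddFrobEvenMultTheo} respectively.

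For the odd-multiplicity case I would derive a contradiction. Writing $\l_g=2g-(2r+1)=6r+3+2j$ and using Proposition~\ref{finalDblLeapsProp} (the gaps of $\sH$ with index between $2\K-2=4r$ and $g$ are spaced by $2$), one finds $\l_{4r+2}=\l_g-2j=6r+3$, so $6r+3$ is a gap of $\sH$. But Theorem~\ref{oddFrobOddMul} forces $\widetilde{\sH}$ to be either $3\N+\sH_{6r+1}$ or $\langle 2j+1;\,j\in\N,\ r\le j\le 2r-1\rangle\cup\sH_{6r+1}$, so $\widetilde{\sH}$ contains $3$ in the first case and $2r+1$ in the second; both lie below $6r+1$, hence in $\sH$, and since $6r+3=3(2r+1)$ is a multiple of each, additive closure gives $6r+3\in\sH$, a contradiction. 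Thus $n_1$ is forced to be even.

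It remains to show that the even-multiplicity case delivers exactly the conclusion of the corollary. Theorem~\ref{oddFrobEvenMultTheo} tells us that every nongap of $\widetilde{\sH}$ smaller than $\l_{\widetilde g}=6r+1$ is even; since $\sH$ and $\widetilde{\sH}$ agree below $6r+1$ and $6r+1$ is a gap of $\widetilde{\sH}$ (hence of $\sH$), the same holds for $\sH$ in this range. For the range $(6r+1,\l_g)$, the gaps of $\sH$ are the added ones $\l_{4r+2},\dots,\l_{g-1}=6r+3,6r+5,\dots,6r+1+2j$, all odd, so the nongaps of $\sH$ there are exactly the even numbers $6r+2,6r+4,\dots,6r+2+2j$. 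Consequently every nongap of $\sH$ smaller than $\l_g$ is even, which is precisely the stated alternative.

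The essential difference from the even-Frobenius case is that here the reduction does not land on a single semigroup: the odd Frobenius number supports several limit sparse semigroups, so the argument cannot end in one clean contradiction. The main point is therefore to see that both odd-multiplicity types collapse through the forced gap $6r+3=3(2r+1)$, while the even-multiplicity type propagates its $r$-hyperelliptic shape upward to $\sH$. The only computation worth verifying carefully is the identification $\l_{4r+2}=6r+3$ together with the location of the $j+1$ added gaps, since these pin down both the contradiction and the parity of the remaining nongaps.
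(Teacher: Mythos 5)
Your proposal is correct and follows essentially the same route as the paper: both reduce via Lemma~\ref{reductionLemma} to a limit sparse semigroup of genus $4r+1$ with Frobenius number $6r+1$, invoke the classification of Theorems~\ref{oddFrobEvenMultTheo} and~\ref{oddFrobOddMul}, eliminate the odd-multiplicity possibilities through the forced gap $6r+3=\l_{4r+2}=3(2r+1)$ (which each of $3$ and $2r+1$ divides, exactly the paper's cases (B) and (C)), and read off the all-even-nongaps alternative from the even-multiplicity case, exactly as in the paper's case (A). Your only departures are cosmetic: you organize the cases by the parity of the multiplicity of $\widetilde{\sH}$ rather than by the three outcomes, and you make explicit (via Proposition~\ref{finalDblLeapsProp}) the computation $\l_{4r+2}=6r+3$ and the count of $j+1$ removed gaps, which the paper asserts directly.
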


\begin{proof}
Suppose $\sH$ is a sparse semigroup of genus $g = 4r + j=2(2r+1)+(j-2)$, $j \geq 2$, with
$\l_g=2g-(2r+1)$. Then, by Lemma \ref{reductionLemma}, there exists a sparse semigroup 
$\widetilde{\sH}$ of genus $\widetilde{g} = 4r + 1$ and $\l_{\widetilde{g}} = 6r + 1$ such
$\sH$ is a subsemigroup of $\widetilde{\sH}$. The previous theorems tell us that we have the
following possibilities:
\begin{enumerate}

\item[(A)] All nongaps of $\widetilde{\sH}$ (and so of $\sH$) smaller than 
$\l_{\widetilde{g}} = 6r + 1$ are even 

In this case, by the construction of $\widetilde{\sH}$ in the proof of Lemma \ref{reductionLemma},
the gaps of $\sH$ larger than $6r$ are given by a sequence of double leaps starting at $6r+1$,
and so all the remaining nongaps of $\sH$ smaller than $\l_g$ are even.
 
\item[(B)] $\widetilde{\sH} =3\N+\sH_{6r+1}$
  
Here, by the construction of $\widetilde{\sH}$ in the proof of Lemma \ref{reductionLemma},
we see that 
$\{n \in \N \, ; \, n < 6r+2 \} \cap \sH = \{n \in \N \, ; \,  n < 6r+2 \} \cap \widetilde{\sH}$; 
in particular, we have $6r+3 \in \sH$, which contradicts the fact that 
$6r+3 = \l_{4r+2} = \l_{g - j+2}$ is a gap of $\sH$.

   \item [(C)] $\widetilde{\sH} = \langle 2j+1; r \le j \le 2r-1\rangle \cup \sH_{6r+1}$. 

Since $2r+1 \in \sH$ we have $6r+3=3(2r+1) \in \sH$ -- which also contradicts the fact that $6r+3 = \l_{4r+2} = \l_{g - j+2}$ is a gap of $\sH$.
   
\end{enumerate}
   
\end{proof}

\section{On sparse Weierstrass semigroups}

One of the main applications of numerical semigroups is the study of Weierstrass points on curves.
Having dealt (and classified limit) sparse semigroups, it is natural to ask whether those are realized as a Weierstrass semigroup. 

By a \emph{curve}, we mean a smooth projective curve defined over an algebraically
closed field of characteristic zero. If $\cX$ is a curve and $P\in\cX$ is a
point of $\cX$, then the Weierstrass semigroup $H(P)$ of the pair $(\cX,P)$
consists of the integers $n$ for which there does exists a meromorphic
function on $\cX$ with pole divisor $nP$, i.e.
$$H(P):=\{n\in\N \, ; \,dim H^0(\cX\, ,\SO_{\cX}((n-1)P))< \dim H^0(\cX\, ,\SO_{\cX}(nP))\}$$
It is clear that $H(P)$ is a numerical semigroup. Now, it follows from Riemann-Roch Theorem
that the gap sequence of $H(P)$ is the sequence of orders of vanishing 
of the holomorphic differentials of $\cX$ at $P$. In particular, the genus of $H(P)$
is equal to the genus of the curve $\cX$. A numerical semigroup is a \emph{Weierstrass semigroup}
if it is realized as Weierstrass semigroup of some pair $(\cX,P)$. A point on a curve
is a \emph{Weierstrass point} if its associated Weierstrass semigroup is different from $\sH_{g}$, where
$g$ is the genus of the curve.

It is known, from Rim--Vitulli \cite{Rim-Vitulli}, that the negatively graded semigroups are only
the ordinary, hyperordinary and those semigroups of multiplicity $m>1$ having precisely one gap
between $m$ and $2m$.
By a theorem of  Pinkham \cite{Pinkham}, we know that a monomial curve associated to a negatively graded semigroup can be smoothed.
In particular, a (hyper)ordinary semigroup is a Weierstrass semigroup. Thus, from those works and Theorem \ref{charactSparseLimTheo}, we get:

\begin{corollary}
Let $\sH$ be a limit sparse semigroup of genus $g$.
If the Frobenius number $\l_g$ is even, then $\sH$ is a Weierstrass semigroup;
\end{corollary}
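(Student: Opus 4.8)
The plan is to reduce the assertion to the realization results for (hyper)ordinary semigroups recalled immediately before the statement. Since $\sH$ is a \emph{limit} sparse semigroup, its genus satisfies $g = 2\K - 1$, and the hypothesis that $\l_g$ is even lets us write $\l_g = 2g - 2r$ with $\K = 2r$. Under exactly these hypotheses, Theorem \ref{charactSparseLimTheo} determines $\sH$ completely: one has $\sH = 3\N + \sH_{6r-2}$. Thus the combinatorial work is already done, and $\sH$ is a hyperordinary semigroup of multiplicity $3$.

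With the structure of $\sH$ in hand, I would appeal to the two classical facts quoted above. By Rim--Vitulli \cite{Rim-Vitulli}, every hyperordinary semigroup is negatively graded; and by Pinkham \cite{Pinkham}, the monomial curve attached to a negatively graded semigroup can be smoothed. Smoothing this monomial curve produces a deformation whose generic fiber is a smooth projective pointed curve $(\cX, P)$ whose Weierstrass semigroup at $P$ is precisely $\sH$. Hence $\sH = H(P)$ is realized as a Weierstrass semigroup, which is the desired conclusion.

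The proof carries no genuine obstacle of its own: all the difficulty has been front-loaded into Theorem \ref{charactSparseLimTheo}, whose output lands $\sH$ squarely among the semigroups for which realizability is known. The only point deserving a moment's attention is to confirm that the displayed form $3\N + \sH_{6r-2}$ is genuinely hyperordinary (so that the Rim--Vitulli classification applies) rather than collapsing to the ordinary case; this is clear, since the multiplicity is $3$ and $3 < 6r - 2$ for every $r \geq 1$, so the defining inequality $0 < m < g$ for hyperordinary semigroups holds with $m = 3$ and ordinary part $\sH_{6r-2}$.
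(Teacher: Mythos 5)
Your proof is correct and follows exactly the route the paper intends: the paper states this corollary as an immediate consequence of Theorem \ref{charactSparseLimTheo} (which identifies $\sH = 3\N + \sH_{6r-2}$, hyperordinary of multiplicity $3$) combined with the Rim--Vitulli classification of negatively graded semigroups and Pinkham's smoothing theorem, precisely as you argue. Your extra check that $3\N + \sH_{6r-2}$ genuinely satisfies the hyperordinary condition $0 < m < 6r-2$ is a reasonable touch of care that the paper leaves implicit.
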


It is clear that if $\sH$ is a limit sparse semigroup
with odd Frobenius number of the type $\sH = 3\N+\sH_{6r+1}$ (see Theorem \ref{oddFrobOddMul}),
then $\sH$ is also a Weierstrass semigroup. 

\begin{corollary}
Let $\sH'$ be any numerical semigroup of genus $r$. Consider 
$\sH:=2\sH' \cup \sH_{6r+1}$. Then $\sH$
is a limit sparse semigroup with odd Frobenius number with even
multiplicity (see Theorem \ref{oddFrobEvenMultTheo}).
Reciprocally, every semigroup satisfying the hypothesis of Theorem \ref{oddFrobEvenMultTheo}
 arises in this way. Moreover, $\sH$ is Arf if and only if $\sH'$ is Arf. 

\end{corollary}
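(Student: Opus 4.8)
The plan is to treat the three assertions of the corollary in turn, reading off everything from the explicit gap structure of $\sH:=2\sH'\cup\sH_{6r+1}$.

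\emph{Direct construction.} First I would verify that $\sH$ is a numerical semigroup: it contains $0=2\cdot 0$, and it is closed under addition by a short case split (both summands in $2\sH'$ give $2(m+n)\in 2\sH'$; both in $\sH_{6r+1}$ stay there; in the mixed case the $\sH_{6r+1}$ part pushes the sum to be $0$ or $\geq 6r+2$). Since $\sH'$ has genus $r$, its Frobenius number is at most $2r-1$, so every integer $\geq 2r$ lies in $\sH'$; hence the positive nongaps of $\sH$ below $6r+1$ are exactly the even numbers $2m$ with $m\in\sH'$, $m\leq 3r$, and these number $3r-r=2r$. Consequently the gaps of $\sH$ are the $3r+1$ odd integers in $[1,6r+1]$ together with the $r$ even numbers $2m$ with $m$ a gap of $\sH'$, so $g=4r+1$, $\l_g=6r+1$, and $\K=2g-\l_g=2r+1$, giving $g=2\K-1$. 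Sparseness is then immediate: between two consecutive odd gaps lies a single even integer, so consecutive gaps are spaced by $1$ or $2$. Thus $\sH$ is a limit sparse semigroup with odd Frobenius number, and its multiplicity is even since every positive nongap below $\l_g$ is even.

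\emph{Converse.} Here I would start from the conclusion of Theorem \ref{oddFrobEvenMultTheo}: every nongap of $\sH$ below $\l_g=6r+1$ is even and $\sH$ is $r$-hyperelliptic. Setting $\sH':=\{m\in\N\, ;\, 2m\in\sH\}$, I check it is a numerical semigroup ($0\in\sH'$; closed since $\sH$ is; finite complement because $2m\in\sH$ for all $2m\geq 6r+2$), and that its gaps are precisely the $m$ with $2m$ an even gap of $\sH$, of which there are $r$, so $\sH'$ has genus $r$. The identity $\sH=2\sH'\cup\sH_{6r+1}$ follows by element chasing: a nongap of $\sH$ is either $0$, a positive even $2m<6r+1$ (forcing $m\in\sH'$, so $2m\in 2\sH'$), or $\geq 6r+2$ (so in $\sH_{6r+1}$); the reverse inclusion is the definition of $\sH'$ together with $\sH_{6r+1}\subseteq\sH$.

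\emph{Arf equivalence and the main obstacle.} Using the characterization $2n_i-n_j\in\sH$ for all nongaps $n_i\geq n_j$, I would argue both directions. If $\sH'$ is Arf and $a\geq b$ are nongaps of $\sH$: when $a\geq 6r+2$ one has $2a-b\geq a\geq 6r+2\in\sH$; when $a\leq 6r$ both $a,b$ are even, $a=2\alpha$, $b=2\beta$ with $\alpha\geq\beta\in\sH'$, and $2a-b=2(2\alpha-\beta)\in 2\sH'\subseteq\sH$ by the Arf property of $\sH'$. Conversely, if $\sH$ is Arf and $\alpha\geq\beta$ are nongaps of $\sH'$, applying the Arf property of $\sH$ to $2\alpha\geq 2\beta$ yields $2(2\alpha-\beta)\in\sH$; this even element either satisfies $2(2\alpha-\beta)\leq 6r$, whence $2\alpha-\beta\in\sH'$ by the correspondence between even nongaps and $\sH'$, or satisfies $2(2\alpha-\beta)\geq 6r+2$, in which case $2\alpha-\beta\geq 3r+1$ exceeds the Frobenius number $\leq 2r-1$ of $\sH'$, so $2\alpha-\beta\in\sH'$ anyway. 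The only delicate step, and the one I would handle most carefully, is precisely this last case: doubling can push a value past the conductor of $\sH$, so one cannot invoke the literal correspondence $2\sH'\leftrightarrow$ even nongaps and must instead use the genus bound on the Frobenius number of $\sH'$. Everything else is routine bookkeeping with the explicit gap description.
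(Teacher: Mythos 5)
Your proposal is correct and takes essentially the same approach as the paper: the same explicit gap count ($3r+1$ odd gaps plus the $r$ even gaps $2m$ for $m$ a gap of $\sH'$) for the direct implication, and the same halving construction $\sH'=\{m\in\N \, ; \, 2m\in\sH\}$ via Theorem \ref{oddFrobEvenMultTheo} for the converse. Where the paper merely asserts that the sparse and Arf conditions ``follow from the construction,'' you supply the implicit details---notably the Arf verification through the characterization $2n_i-n_j\in\sH$, whose conductor-crossing case rests on the same bound the paper records at the end of its proof (the last gap of $\sH'$ is at most $2r-1$).
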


\begin{proof}
It is clear that if $\sH'$ is a semigroup of genus $r$, then
the Frobenius number of $\sH:=2\sH' \cup \sH_{6r+1}$ 
is $6r+1$. The gaps of $\sH$ are all the odd integers in $[1,6r+1]$
and the even numbers $2m$ where $m$ is a gap of $\sH'$.
Thus the genus of $\sH$ is $4r+1$. The sparse condition follows
from the construction of $\sH$.
On the other hand, let $\sH$ be a sparse semigroup of genus
$g=4r+1$, with Frobenius number $\l_g=2g-(2r+1)$, and even
multiplicity $n_1$. Thus Theorem \ref{oddFrobEvenMultTheo}
ensures that all nongaps smaller than $\l_g=6r-1$ are even.
The odd gaps are all the odd numbers between $1$ and $\l_g=6r+1$. 
So there are $3r+1$ odd gaps and $r$ even gaps. Thus
$\sH\cap [1,\dots,\l_{g}]=\{2\,n_1,2\,n_2,\dots,2\,n_{2r}\}$.
We consider the set $\sH':=\{0,n_1,\dots,n_{2r},n_{2r}+1,n_{2r}+2,\dots\}$.
Since $\sH$ is additively closed, $\sH'$ a semigroup.
An integer is a gap of $\sH'$ if and only if $2n$ is a gap of
$\sH$. Thus the genus of $\sH'$ is $r$. The Arf condition follows
from the construction of $\sH'$.
We notice that the last gap in $\sH'$ is at most $2r-1$, and so the last
even gap of $\sH$ is at most $4r-2$.
\end{proof}

Following the same steps of the above Corollary, we get:

\begin{corollary}\label{rhiperg4r1}
Let $\sH$ be a semigroup as in Corollary \ref{genusQuotaSparseOddTheo}, with
$g>4r+1$. Then $\sH$ is obtained in the following way: take any numerical 
semigroup $\sH' \subset \N$ of genus $r$, and
set $\sH:=2\sH' \cup \sH_{2g-2r-1}$. 

\end{corollary}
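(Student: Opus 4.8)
The plan is to mimic the proof of the preceding corollary almost verbatim, the only change being that the input ``all nongaps below $\l_g$ are even'' is now supplied by Corollary \ref{genusQuotaSparseOddTheo} (the regime $g>4r+1$) rather than by Theorem \ref{oddFrobEvenMultTheo}. First I would invoke Corollary \ref{genusQuotaSparseOddTheo}: since $\sH$ is sparse with $\l_g=2g-(2r+1)$ and $g>4r+1$, every nongap of $\sH$ smaller than $\l_g$ is even. As $\l_g$ is odd, this forces every odd integer in $[1,\l_g]$ to be a gap; there are exactly $(\l_g+1)/2=g-r$ such integers, so the remaining $g-(g-r)=r$ gaps are even, and $\sH$ is $r$-hyperelliptic.

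Next I would recover $\sH'$ from $\sH$ by setting $\sH':=\{m\in\N \, ; \, 2m\in\sH\}$. Since $0\in\sH$ we have $0\in\sH'$; if $2m,2m'\in\sH$ then $2(m+m')=2m+2m'\in\sH$, so $\sH'$ is closed under addition; and because $\sH$ contains every integer $\geq 2g-2r$, the set $\sH'$ has finite complement, hence is a numerical semigroup. By construction $m\notin\sH'$ exactly when $2m\notin\sH$, so the gaps of $\sH'$ correspond bijectively to the even gaps of $\sH$ via $m\mapsto 2m$. Having already counted $r$ even gaps, I conclude that $\sH'$ has genus $r$.

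Finally I would verify the set equality $\sH=2\sH'\cup\sH_{2g-2r-1}$. On the range $[1,\l_g]$ every nongap of $\sH$ is even by the first step, hence of the form $2m$ with $m\in\sH'$, so these nongaps are precisely $2\sH'\cap[1,\l_g]$; on the range $[\l_g+1,\infty)=[2g-2r,\infty)$ every integer is a nongap of $\sH$, and this is exactly $\sH_{2g-2r-1}\setminus\{0\}$. Conversely $2\sH'$ contributes only even nongaps of $\sH$, while $\sH_{2g-2r-1}$ supplies $0$ together with the full tail $[2g-2r,\infty)$, so the two sides coincide. The only genuinely delicate point in the whole argument is the very first one, namely ruling out any odd nongap below $\l_g$; this is handed to us by Corollary \ref{genusQuotaSparseOddTheo} precisely because $g>4r+1$, and everything afterward is the same bookkeeping as in the preceding corollary.
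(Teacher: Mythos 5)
Your proposal is correct and takes essentially the same route as the paper's own proof: invoke Corollary \ref{genusQuotaSparseOddTheo} in the regime $g>4r+1$ to rule out odd nongaps below $\l_g$, count the $g-r$ odd gaps and hence $r$ even gaps, and recover $\sH'$ by halving the even nongaps so that $\sH=2\sH'\cup\sH_{2g-2r-1}$. Your write-up is in fact somewhat more detailed than the paper's (which leaves the semigroup verification for $\sH'$ and the set equality implicit, having done the same bookkeeping in the preceding corollary), but there is no substantive difference.
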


\begin{proof}
If $\sH$ is a numerical semigroup as in Corollary \ref{genusQuotaSparseOddTheo}, with $g>4r+1$, then 
its odd gaps are all the odd numbers between $1$ and $\l_g=2g-(2r+1)$. So, there are $g-r$ odd gaps and $r$ even gaps. Thus 
$\sH:=2\sH' \cup \sH_{2g-2r-1}$ where $\sH'$ is a numerical
semigroup of genus $r$. Note that the last gap of $\sH'$ is at most $2r-1$, and so the last even gap of
 $\sH$ is at most $4r-2$. \end{proof}

As can be noted of from the last two corollaries above, and the Corollary \ref{genusQuotaSparseOddTheo},
we may expect that some sparse semigroups with odd Frobenius number arise as a double 
covering of a genus $r$ curve. We recall that Torres \cite{Torres} characterized $r$-hyperelliptic curves 
of genus $g$ which arise as a double covering of a genus $r$-curves under the assumption $g\ge 6r+4$.
Gathering the Corollary \ref{rhiperg4r1} and the comment after the proof of Theorem A of \cite{Torres} we get:

\begin{corollary}
Let $\sH$ a numerical sparse semigroup of genus $g\ge 6r+4$ and Frobenius number $\l_{g}=2g-(2r+1)$.
If $\sH$ is a Weierstrass semigroup, then it arises as a double covering of a genus $r$-curve. In this case, we have $\sH:=2\sH' \cup \sH_{2g-2r-1}$, where $\sH'=\{n/2\mid\, n\in \sH \text{ is even}\}$ is a Weierstrass semigroup of genus $r$.
\end{corollary}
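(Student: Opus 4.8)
The plan is to combine the purely combinatorial description of $\sH$ obtained in the previous corollaries with Torres's geometric characterization of $r$-hyperelliptic curves. Since $g \geq 6r + 4 > 4r + 1$, Corollary \ref{genusQuotaSparseOddTheo} applies and forces all nongaps of $\sH$ smaller than $\l_g$ to be even. Consequently the odd gaps of $\sH$ are precisely the odd integers in $[1,\l_g]$ (note $\l_g = 2g-(2r+1)$ is itself odd), so there are $g-r$ odd gaps and hence exactly $r$ even gaps; that is, $\sH$ is $r$-hyperelliptic. Moreover, because $g > 4r+1$, Corollary \ref{rhiperg4r1} already supplies the structural identity $\sH = 2\sH' \cup \sH_{2g-2r-1}$ with $\sH' = \{n/2 \mid n \in \sH \text{ even}\}$ a numerical semigroup of genus $r$. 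Thus the explicit form asserted in the statement is settled before any geometry enters; what remains is to produce the double covering and to show that $\sH'$ is itself a Weierstrass semigroup.

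Next I would bring in the hypothesis that $\sH = H(P)$ for some pair $(\cX, P)$ with $\cX$ of genus $g$. Since $H(P)$ is $r$-hyperelliptic and $g \geq 6r + 4$, I would invoke Torres's Theorem A in \cite{Torres}, which under exactly this numerical hypothesis guarantees that $\cX$ is $r$-hyperelliptic in the geometric sense: there is a double covering $\pi : \cX \to \cY$ onto a curve $\cY$ of genus $r$, and $P$ is a ramification point of $\pi$. This is the step that genuinely converts the combinatorial datum into geometry, and it is the reason the bound $g \geq 6r+4$ is imposed.

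Finally, I would identify $\sH'$ with a Weierstrass semigroup on $\cY$. Let $Q = \pi(P)$. Because $P$ is a ramification point, the pullback under $\pi$ of a local parameter at $Q$ vanishes to order $2$ at $P$; hence a meromorphic function on $\cY$ with a pole of order $m$ at $Q$ pulls back to a function on $\cX$ with a pole of order $2m$ at $P$, and conversely every even nongap of $H(P)$ arises this way. This identification is precisely the content of the comment following the proof of Theorem A in \cite{Torres}. Therefore $H(Q) = \{n/2 \mid n \in H(P) \text{ even}\} = \sH'$, so $\sH'$ is a Weierstrass semigroup of genus $r$, as claimed.

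The main obstacle is the passage in the second step from the combinatorial property ``$\sH$ has exactly $r$ even gaps'' to the geometric conclusion that $\cX$ is a double cover of a genus $r$ curve with $P$ ramified; this is not elementary and is exactly where $g \geq 6r + 4$ is used, being furnished by Torres's theorem rather than proved here. By contrast, the first step is immediate from the earlier corollaries, and the third step is essentially bookkeeping with ramification indices.
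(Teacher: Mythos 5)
Your proposal is correct and follows essentially the same route as the paper, which obtains this corollary by combining Corollary \ref{rhiperg4r1} (giving the structure $\sH = 2\sH' \cup \sH_{2g-2r-1}$ with $\sH'$ of genus $r$) with Torres's Theorem A and the comment after its proof in \cite{Torres}. Your version merely spells out the intermediate steps (the application of Corollary \ref{genusQuotaSparseOddTheo} via $g \ge 6r+4 > 4r+1$, and the ramification bookkeeping identifying $H(\pi(P))$ with $\sH'$) that the paper leaves implicit.
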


\begin{question}
Let $\sH' \subset \N$ any Weierstrass semigroup of genus $r$, and $g\ge 4r+1$.
Is the semigroup $\sH:=2\sH' \cup \sH_{2g-2r-1}$ always Weierstrass? 
\end{question}

\begin{question}
Let $\sH$ be a sparse Weierstrass semigroup with odd Frobenius number
$\l_{g}= 2g - (2r+1)$ and $g\ge 4r+1$ such that the multiplicity $n_1$ of $\sH$ is even. Is the semigroup $\sH'=\{n/2\mid\, n\in \sH \text{ is even}\}$ always Weierstrass? 
\end{question}

\begin{remark} 
If the answer to Problem A of \cite{Komeda} is affirmative then the answer to this last question is also affirmative (indeed it would be enough that there were
no numerical semigroup belonging to the box numbered by viii) in \cite{Komeda}).
\end{remark}

\begin{question}
Is the limit sparse semigroup $\sH = \langle 2j+1; j \in \N, r \le j
\le 2r-1\rangle \cup \sH_{6r+1}$ Weierstrass for every $r>1$?
\end{question}

\section*{Acknowledgements}
The authors warmly thank Prof. Luciane Quoos (Universidade Federal 
do Rio de Janeiro) for introducing sparse semigroups to them.

\section*{References}

\end{document}